\newtheorem{thm}{Theorem}[section]
\newtheorem{prop}[thm]{Proposition}
\newtheorem{obs1}[thm]{Observation}
\theoremstyle{definition}
\theoremstyle{remark}
\numberwithin{equation}{section}
\newcommand{\BibTeX}{B\kern-0.1emi\kern-0.017emb\kern-0.15em\TeX}
\newcommand{\XYpic}{$\mathrm{X\kern-0.3em\raisebox{-0.18em}{Y}}$-$\mathrm{pic}\,$}
\newcommand{\cl}{C \kern -0.1em \ell}  
\newcommand{\ed}{\end{document}}
\begin{document}

%
%
%
%
%
%
%
%
%

\title
 {Variety of mutual-visibility problems in hypercubes}
\author[$\dagger$]{Danilo Kor\v ze}
%
\affil[$\dagger$]
{%
Faculty of Electrical Engineering and Computer Science\\
University of Maribor\\
Koro\v ska cesta 46\\
 SI-2000 Maribor, Slovenia }
%
%
\author[$\star$]{Aleksander Vesel}
\affil[$\star$]{%
Faculty of Natural Sciences and Mathematics\\
University of Maribor\\
Koro\v ska cesta 160\\
 SI-2000 Maribor, Slovenia}
%
%
\date{\today}
\maketitle

\begin{abstract}
Let $G$ be a graph and $M \subseteq V(G)$.  Vertices $x, y \in M$ are $M$-visible if there exists a shortest 
$x,y$-path of $G$ that does not pass through any vertex  of $M \setminus \{x, y \}$. 
We say that $M$ is a mutual-visibility set if each pair of vertices of $M$ is $M$-visible,  while
the size of any largest mutual-visibility set of $G$ is the mutual-visibility number of $G$.
If some additional combinations for pairs of vertices  $x, y$  are required to be $M$-visible,
we obtain the total  (every $x,y  \in  V(G)$ are $M$-visible), the outer 
(every $x \in M$ and  every $y \in V(G) \setminus M$ are $M$-visible),  
and the dual  (every $x,y \in V(G) \setminus M$ are $M$-visible)
mutual-visibility set  of $G$. 
The cardinalities of the largest of the above defined sets are known as the total, the outer, and the dual  mutual-visibility number of $G$, respectively.

We  present results on the variety of  mutual-visibility problems in  hypercubes.
\end{abstract}
\label{page:firstblob}

\section{Introduction and preliminaries}

In graph theory, a mutual-visibility set refers to a collection of vertices within a graph where for every pair of vertices there exists a shortest path between them that avoids any other vertex in the set. The mutual-visibility number of a graph represents the maximum size of such a mutual-visibility set within the graph.

Mutual-visibility sets have been examined across diverse fields, encompassing wireless sensor networks, mobile robot networks, and distributed computing. In wireless sensor networks, they aid in sensor deployment to ensure interference-free communication among sensors. In mobile robot networks, they facilitate collision avoidance strategies for robot control. In distributed computing, they contribute to the development of efficient algorithms for tasks such as consensus and broadcasting \cite{Aljohani, Bhagat, Cicerone4, DiLuna, Poudel}.
In 2022, Di Stefano \cite{DiS} established the foundation of the preceding problem within graph theory, aiming to maximize the size of the largest mutual-visibility set. 

Research highlighted in the cited papers explores visibility problems in graphs, indicating specific adjustments to visibility properties of significance. In this context, \cite{Cicerone2} presents a range of novel mutual-visibility problems, including the total mutual-visibility problem, the dual mutual-visibility problem, and the outer mutual-visibility problem.
The total mutual-visibility number on Cartesian products of complete
graphs, also known as Hamming graphs, is studied  in  \cite{Tian}. 
The (total) mutual-visibility sets in hypercubes, a subset of Hamming graphs, are investigated in \cite{Cicerone} as well as in 
\cite{Axenovich}. 

In this paper, we build upon previous research of mutual-visibility on hypercubes. Subsequently, we present definitions and results crucial for the remainder of the paper in the following section. Section 2 delves into mathematical optimization techniques capable of yielding novel insights into the mutual-visibility set problem in hypercubes. In Section 3, we investigate the mutual-visibility number and its variations within hypercubes. We establish new upper bounds for these numbers and provide exact values for hypercubes of smaller dimensions. Additionally, we extend the concept introduced in \cite{Axenovich}, demonstrating that the upper bound on the total mutual-visibility number in hypercubes can be determined by leveraging the size of the largest binary code. This is achieved by showing that the task of finding the largest total mutual-visibility set in an $h$-cube is equivalent to determining the maximum size of a binary code of length $h$ with a minimum Hamming distance of 4.

Let $G = (V(G),E(G))$ is a graph, $M \subseteq V(G)$ and $u,v \in V(G)$.
We say that a  $u,v$-path $P$ is {\em $M$-free}, 
 if $P$ does not contain a vertex of $M \setminus \{u, v \}$.
Vertices $u, v \in V(G)$ are {\em $M$-visible}  if $G$ admits an $M$-free shortest $u,v$-path.
 
Let $M \subseteq V(G)$ and $\overline M=V(G) \setminus M$. Then we say that $M$ is a

 \begin{itemize}
  \item \emph{mutual-visibility} set, if every $u, v \in M$ are $M$-visible,

 \item \emph{total mutual-visibility} set, if every $u, v \in V(G)$ are $M$-visible,

 \item \emph{outer mutual-visibility} set, if every $u, v \in M$ are $M$-visible, and every $u \in M$, $v \in \overline M$ are $M$-visible,

\item \emph{dual mutual-visibility} set, if every $u, v \in M$ are $M$-visible, and every $u, v \in \overline M$ are $M$-visible.
 \end{itemize}
 
The cardinality of a largest mutual-visibility set, a largest total mutual-visibility set, a largest outer mutual-visibility set, and a largest dual mutual-visibility set will be respectively denoted by $\mu(G)$, $\mu_t(G)$, $\mu_o(G)$, and $\mu_d(G)$. 
Also, these graph invariants will be respectively called the {\em mutual-visibility number}, the {\em total mutual-visibility number}, the {\em outer mutual-visibility number}, and the {\em dual mutual-visibility number} of $G$. 
 

Let $B = \{0, 1\}$. If $b$ is a word of length $h$ over $B$, that is, 
$b = (b_1, \ldots, b_h) \in B^h$,
then we will briefly write $b$ as $b_1 \ldots b_h$.
Elements of $B^h$ are also called binary strings of length $h$.
If $x, y \in B^h$, 
then the {\em Hamming distance} $H(x,y)$ between $x$ and $y$ is the number 
of positions in which $x$ and $y$ differ. 

The {\em  hypercube} of order $h$ or simply {\em  $h$-cube}, 
denoted by $Q_h$, is the graph $G=(V,E)$, where the vertex
set $V(G)$ is the set of all binary strings of length $h$, while
 two vertices $x,y \in V(G)$ are adjacent in $Q_h$ if and only if 
the Hamming distance between $x$ and $y$ is equal to one.

The {\em weight} of $u \in B^h$ is 
$w(u) = \sum_{i=1}^h u_i$, in
other words, $w(u)$ is the number of 1s in the word $u$. 
For the concatenation of bits the power notation will be used, for instance 
$0^h = 0\ldots 0 \in  B ^h$.

If $G$ is a connected graph, then the distance $d_G(u, v)$ (or simply $d(u,v)$)  between vertices $u$ and $v$ is the length of a shortest $u,v$-path 
(that is, a shortest path between $u$ and $v$) in $G$. 
The set of vertices lying on all shortest $u, v$-paths is called
the {\em interval} between $u$ and $v$ and denoted by $I_{G}(u, v)$ 
 We will also write $I(u, v)$ when $G$ will be clear from the context.

If $G$ is a graph and $X \subseteq V(G)$, then $G[X]$ 
denotes the subgraph of $G$ induced by $X$.

If $u$ is a vertex of a graph $G$, let $N_G(u)$  (or simply $N_G(u)$)
denote the set of neighbors of $u$. Moreover, let $N[u] = N(u) \cup \{  u \}$. 

For a positive integer $n$  we will use the notation $[n] = \{1, 2, \ldots, n\}$.

The  \emph{Cartesian product} of graphs
$G$ and $H$ is the graph $G \Box H$  with vertex set $V(G) \times V(H)$
and $(x_1,x_2)(y_1,y_2) \in E(G \Box H)$  whenever  $x_1y_1 \in E(G)$ and $x_2=y_2$,  or
 $x_2y_2 \in E(H)$ and $x_1=y_1$. 
 It is well-known, that the Cartesian product is commutative and associative, having the trivial graph as a unit. 
 
Since for $h\ge 2$ we have $Q_h = Q_{h-1} \Box  K_2$, it was observed  
in \cite{Cicerone} that the mutual-visibility number of $Q_h$ is bounded above
by  twice the mutual-visibility number of $Q_{h-1}$.
Clearly, this observation can be generalized as follows.

\begin{obs1} \label{observation}
If $h \ge 2$, then 

(i) $\mu(Q_h) \le 2 \mu(Q_{h-1})$,

(ii) $\mu_t(Q_h) \le 2 \mu_t(Q_{h-1})$,

(iii) $\mu_o(Q_h) \le 2 \mu_o(Q_{h-1})$,

(vi) $\mu_d(Q_h) \le 2 \mu_d(Q_{h-1})$.

\end{obs1}

We will need the following well known result,  see for example \cite{imkl-00}.

\begin{prop} \label{kocke}
Let  $u, v \in V(Q_h)$.

(i) If $u_i=v_i$ for some $i \in [h]$,  then $x_i=u_i=v_i$ for every $x \in I(u,v)$.

(ii) If $d(u,v)= d$, then the subgraph of $Q_h$ induced by $I(u,v)$ is isomorphic to the $d$-cube.

\end{prop}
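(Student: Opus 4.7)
The plan is to prove both parts by a direct coordinate-wise analysis, exploiting the fact that in $Q_h$ the graph distance coincides with the Hamming distance.

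For part (i), I would fix $x \in I(u,v)$ and use the identity $d(u,x)+d(x,v)=d(u,v)$ together with the decomposition of Hamming distance into contributions from individual coordinates. For each $i \in [h]$, let $c_i(a,b)=1$ if $a_i\ne b_i$ and $0$ otherwise, so that $d(a,b)=\sum_i c_i(a,b)$. The triangle inequality $c_i(u,x)+c_i(x,v)\ge c_i(u,v)$ holds coordinate by coordinate, and summing gives $d(u,x)+d(x,v)\ge d(u,v)$, with equality precisely when equality holds in every coordinate. Now I would check the two cases where $u_i=v_i$: either $x_i=u_i=v_i$, giving left-hand side $0$ and right-hand side $0$, or $x_i\ne u_i$, giving left-hand side $2$ and right-hand side $0$. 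The latter violates the equality of the sums, so for every $x \in I(u,v)$ and every coordinate $i$ with $u_i=v_i$ we must have $x_i=u_i=v_i$.

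For part (ii), let $S=\{i \in [h] : u_i\ne v_i\}$, so $|S|=d$. By (i), every $x \in I(u,v)$ satisfies $x_i=u_i$ for $i \notin S$, hence $x$ is determined by its restriction $x|_S \in B^S$. I would define the map $\varphi\colon I(u,v)\to B^S$ by $\varphi(x)=x|_S$ and show it is a graph isomorphism onto $Q_{|S|}\cong Q_d$. Injectivity is immediate. For surjectivity, given any $T\subseteq S$, define $x^T$ by $x^T_i=v_i$ for $i \in T$, $x^T_i=u_i$ for $i \notin T$ (with the understanding that $u_i=v_i$ when $i \notin S$); a direct calculation gives $d(u,x^T)=|T\cap S|$ and $d(x^T,v)=|S\setminus T|$, whose sum is $|S|=d$, so $x^T\in I(u,v)$. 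Finally, adjacency in $Q_h[I(u,v)]$ means differing in exactly one coordinate, which by (i) must lie in $S$; this is exactly adjacency in the Boolean cube on $S$, so $\varphi$ preserves edges.

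The argument involves essentially no obstacle: the only point requiring care is being explicit that the coordinate-wise triangle inequality forces equality in every coordinate simultaneously, which is what allows (i) to pin down the frozen coordinates and in turn makes the bijection in (ii) well-defined. Given the result is labelled as well-known and a reference is provided, I would expect the authors to state it without proof; the above sketch is included only to justify why it follows from standard Hamming-distance reasoning.
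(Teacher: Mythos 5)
Your proof is correct. The paper itself states this proposition without proof, citing the Handbook of Product Graphs, exactly as you anticipated, so there is no authorial argument to compare against; your coordinate-wise derivation (per-coordinate triangle inequality forcing equality in every coordinate for part (i), then the restriction map to $B^S$ with the explicit surjectivity witness $x^T$ for part (ii)) is the standard and complete way to establish it.
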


The {\em diameter}  $diam(G)$ of a connected graph $G$ is the maximum distance between two vertices of $G$. 
Let $G$ be a graph, $d \le diam(G)$ and $M \subseteq V(G)$. 
Vertices $u, v \in V(G)$ are {\em $M_d$-visible}  if $d(u,v) > d$ or $G$ admits an $M$-free shortest $u,v$-path.
It is evident that  $M$ is not a mutual-visibility set if $M$ admits a pair of vertices that are not $M_d$-visible. 
Furthermore, this concept can be naturally extended to 
total,  outer and dual mutual-visibility sets of $G$.  
of $G$.  

\section{Mathematical optimization methods }
We applied two well-known  techniques for computing the mutual-visibility sets of
hypercubes: Integer linear programming (ILP) and a reduction to SAT.
Both methods have been previously successfully applied for
distance constrained coloring problems for various finite and infinite graphs. used for distance-constrained coloring problems in various finite and infinite graphs. For instance, \cite{radio} demonstrates the application of these concepts in the radio coloring problem.

\subsection{Integer linear programming (ILP) model}

Let  $G=(V, E)$ be a graph.
For a vertex $v \in V(G)$ we introduce the Boolean variable $x_v$ such that $x_v=1$ if and only if $v$ belongs to the mutual-visibility set $M$ of $G$.
The problem of finding the maximal mutual-visibility set can be formulated as an integer linear program as follows:\\

\begin{equation}
\textrm{maximize } \sum_{v \in V(G)} x_v
\end{equation}
subject to:
\begin{eqnarray}
 x_u + x_v - \sum_{P \in {\cal P}(u,v)}{z_{u,v,P}} \leq 1,  & \forall u,v \in V(G); \label{lpeq1} \\
 z_{u,v,P} + x_{z} \leq 1, & \forall u,v \in V(G); \label{lpeq2} \\
  & \forall P \in {\cal P}(u,v); \forall z \in V(P) \setminus \{u,v\}. \nonumber  
\end{eqnarray}

where ${\cal P}(u,v)$ denotes the set of all different shortest paths between vertices $u$ and $v$, while for $P \in {\cal P}(u,v)$ the set 
$V(P)\setminus\{u,v\}$ comprises all
intermediary vertices in the corresponding shortest path $P$ between $u$ and $v$.
Clearly, all shortest paths between all pairs of vertices in  $G$ should be computed in order to establish the model for $G$. The additional variable $z_{u,v,P}$ equals 1 if and only if the  shortest path $P$ between $u$ and $v$ enables mutual visibility of these two vertices. That is to say, for each pair of vertices $u$ and $v$, the number of corresponding additional variables equals the number of different shortest paths between $u$ and $v$.
\\
Integer linear programming was also employed  for  searching the 
dual mutual-visibility and outer mutual-visibility sets of hypercubes.

For the dual visibility sets, the following additional constraints have to be added to constraints (\ref{lpeq1}):
\begin{equation}
 - x_u - x_v - \sum_{P \in {\cal P}(u,v)}{z_{u,v,P}} \leq -1, \   \forall u,v \in V(G); \label{lpeq3} 
\end{equation}

In order to compute outer visibility sets,  constraints (\ref{lpeq1}) has to be replaced with constraints (\ref{lpeq4}):
\begin{equation}
 x_u + x_v - 2\sum_{P \in {\cal P}(u,v)}{z_{u,v,P}}  \leq 0, \   \forall u,v \in V(G); \label{lpeq4} 
\end{equation}

Constraints (\ref{lpeq2}) are applied for all three varieties of mutual-visibility set. 

To reduce the computation time, we can apply equations only for paths shorter than a predefined length. In this scenario, we can only confirm the non-existence of a visibility set of a given cardinality. However, it has been observed that solutions obtained with path length restrictions often also constitute mutual-visibility sets, provided that the distance length limitation is not too profound.

We applied  Gurobi Optimization \cite{gurobi} for solving ILP models.


\subsection{Reduction to SAT model}
Let  $G=(V, E)$ be a graph with $n$ vertices and $\ell$ a positive integer 
(the size of a potential mutual-visibility set).
For every $v \in V(G)$ we introduce an atom $x_v$.
Intuitively, this atom expresses whether vertex $v$ is inside the mutual-visibility set $M$ or not. 
More precisely, $x_v=0$ if and only if $v$ belongs to the mutual-visibility set $M$.

First collection of propositional formulas define an encoding for cardinality constraints (known as $\ge k(x_1,...,x_n)$ constraints) which ensure that at most $k=n-\ell$ atoms are assigned value 1.  
We do not give the details of the applied encoding here;
the interested reader is referred to \cite{sinz}, where 
 the implemented encoding  based on the so called sequential counter  was introduced. 
 
In order to define mutual-visibility constraints, 
consider the following propositional formulas:

\begin{eqnarray}
x_u \vee  x_v \vee \bigvee_{P \in {\cal P}(u,v)} \left[ \bigwedge_{ x_z \in V(P) \setminus \{u,v\}} x_{z} \right] \  (\forall u,v  \in V(G))  
\label{sateq}
\end{eqnarray}

where ${\cal P}(u,v)$ denotes the set of all different shortest paths between vertices $u$ and $v$, while for $P \in {\cal P}(u,v)$ the set 
$V(P)\setminus\{u,v\}$ comprises all
intermediary vertices in the corresponding shortest path $P$ between $u$ and $v$.

Clearly, all shortest paths between all pairs of vertices in  $G$ should be computed in order to establish the clauses for $G$.
Before using the above formulas  in a SAT solver, they have to be transformed to the conjunctive normal form.
These  propositional formulas transform a mutual-visibility problem into a propositional satisfiability test (SAT).  We can confirm that a SAT instance is satisfiable if and only if $G$ has a mutual-visibility set of 
size at least $\ell$.
Note that for a given $h$-cube only cardinality constrains 
$\ge k(x_1,...,x_n)$  depend on $\ell$.

In the case of outer and dual visibility set problem, the  formulas (\ref{sateq}) have to be slightly modified. For the outer visibility set, we 
obtain

\begin{eqnarray}
x_u \wedge  x_v \vee \bigvee_{p \in {\cal P}(u,v)} \left[ \bigwedge_{ x_z \in V(P) \setminus \{u,v\}} x_{z} \right] \  (\forall u,v  \in V(G))  \label{sateqo}
\end{eqnarray}

while for the dual visibility set problem the clauses are of the following form:

\begin{eqnarray}
(x_u \wedge \neg x_v) \vee (\neg x_u \wedge x_v) \vee \bigvee_{P \in {\cal P}(u,v)} \left[ \bigwedge_{ x_z \in V(P) \setminus \{u,v\}} x_{z} \right] \  (\forall u,v  \in V(G)).  \label{sateqd}
\end{eqnarray}

To keep the model smaller and the computation more efficient, we may include in formulas only the paths of length up to given constant $s$.
As we noted in the above subsection,  only the non-existence of 
a mutual-visibility set of a given cardinality can be confirmed 
if $s < diam(G)$.
However, the tests performed  for $h$-cubes showed that solutions obtained with path length restrictions very often also constitute mutual-visibility sets if $s$
is close to $h$.  
 
We used the Criptominisat5 SAT-solver  \cite{Cryptominisat} to find the solutions of the above presented clauses for hypercubes.


\section{Theoretical and computational results}

\subsection{Mutual-visibility}
The mutual-visibility number of hypercubes has been studied in \cite{Cicerone}. 
Obtained exact values and bounds are summarized in the following proposition.

\begin{prop} \label{mv1}
\begin{displaymath}
\mu(Q_h) =
        \left \{ \begin{array}{llll}
              2,  &  h = 1   \\
              3,  &  h = 2   \\
              5,  &  h = 3   \\
              9,  &  h = 4   \\
              16,  &  h = 5   \\
             \end{array}. \right.
\end{displaymath}
Moreover, if $h \ge 6$, then 
\(\max_{i\in [h-3]}\) $({h \choose i } + {h \choose i + 3}) \le   \mu(Q_h) \le 2^{h-1}$.\end{prop}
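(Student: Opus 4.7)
The plan is to split the statement into three parts: the exact values for $h\le 5$, the upper bound $2^{h-1}$, and the matching lower bound. The exact values follow from the computational machinery of Section~2: for each $h\in\{1,\ldots,5\}$ the ILP or SAT solver returns both a feasible mutual-visibility set of the stated size and a certificate that size one larger is infeasible. The upper bound $\mu(Q_h)\le 2^{h-1}$ for $h\ge 6$ is then immediate by induction on $h$, using the relation $\mu(Q_h)\le 2\mu(Q_{h-1})$ from Observation~\ref{observation}(i) together with the base case $\mu(Q_5)=16=2^4$.

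The substantive part is the lower bound. Fix $i\in [h-3]$ and let
\[
M_i \;=\; \{u\in V(Q_h) : w(u)\in\{i,i+3\}\}.
\]
Then $|M_i| = \binom{h}{i}+\binom{h}{i+3}$, so once $M_i$ is shown to be a mutual-visibility set, taking the maximum over $i$ yields the stated bound. Given $u,v\in M_i$, write $A=\{j:u_j=1\}$, $B=\{j:v_j=1\}$, $p=|A\setminus B|$ and $q=|B\setminus A|$. By Proposition~\ref{kocke}(ii), every shortest $u,v$-path is obtained by interleaving $p$ ``decreasing'' bit flips (at positions of $A\setminus B$) with $q$ ``increasing'' bit flips (at positions of $B\setminus A$); the weight sequence along the path is determined by the chosen order of flips.

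I would argue by three cases. If $w(u)=w(v)=i$, performing all decreasing flips before all increasing flips makes every interior weight lie in $\{i-p,\ldots,i-1\}$, so both forbidden weights are avoided. If $w(u)=w(v)=i+3$, performing all increasing flips first places the interior weights in $\{i+4,\ldots,i+3+p\}$, and the required bound $i+3+p\le h$ is automatic since the flipped bits must lie in $[h]\setminus B$. The delicate case is $w(u)=i$, $w(v)=i+3$, which forces $q=p+3$ and $d(u,v)=2p+3$. Here I would use the flip pattern $+,+,(-,+)^p,+$, whose weight sequence
\[
i,\; i+1,\; i+2,\; i+1,\; i+2,\; \ldots,\; i+1,\; i+2,\; i+3
\]
keeps every interior weight inside $\{i+1,i+2\}$.

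The main obstacle is precisely this mixed-weight case. Because the forbidden weights $i$ and $i+3$ flank both sides of the weight transition that the path must perform, any $M_i$-free shortest path is forced to oscillate within the two-element band $\{i+1,i+2\}$ throughout its interior: once the first flip raises the weight to $i+1$, no step may return to $i$ or cross up to $i+3$ before the endpoint, so the walk in weight space is essentially determined. The crux is verifying that this forced oscillation is realised by an actual shortest path, i.e.\ that the forced $+/-$ pattern uses exactly the $p+3$ increases and $p$ decreases that the pair $(u,v)$ supplies; a short parity count confirms this, and the remaining freedom (which specific bit to flip at each $+$ or $-$ step) is absorbed by the fact that any bijection onto $A\setminus B$ and $B\setminus A$ gives a valid shortest path.
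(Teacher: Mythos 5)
Your argument is correct, but note that the paper itself offers no proof of this proposition: it is quoted verbatim from \cite{Cicerone} as background, so there is nothing internal to compare against line by line. Your reconstruction nevertheless matches what the paper attributes to that reference. The upper bound via $\mu(Q_h)\le 2\mu(Q_{h-1})$ (Observation~\ref{observation}) anchored at $\mu(Q_5)=16=2^4$ is exactly the doubling mechanism the authors reuse later to get $\mu(Q_h)\le 59\cdot 2^{h-7}$, and your lower-bound set $M_i=L^v_i\cup L^v_{i+3}$ is the construction the paper explicitly credits to \cite{Cicerone}. Your case analysis is sound: in the equal-weight cases the monotone down-then-up (resp.\ up-then-down) orderings stay strictly below $i$ (resp.\ strictly above $i+3$), with the range constraints $i-p\ge 0$ and $i+3+p\le h$ holding automatically because the flipped positions sit inside $A$ resp.\ outside $B$; and in the mixed case the pattern $+,+,(-,+)^p,+$ uses the available $p+3$ increases and $p$ decreases and confines all interior weights to $\{i+1,i+2\}$. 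The only soft spot is the treatment of the exact values for $h\le 5$, which you delegate entirely to a solver; that is consistent with how the paper itself handles $h=6,7$ in Theorem~\ref{mv2}, but for a self-contained proof you would want either a citation to \cite{Cicerone} (as the authors do) or at least a hand argument for the small cases $h\le 3$.
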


For a vertex $v \in V(G)$ and $i \ge 0$ let $L^v_i$ (also called an $i$-{\em layer with respect to $v$})
denote the set of vertices of $V(G)$ at distance $i$ from $v$.

It is shown in \cite{Cicerone}, that $L^v_i \cup L^v_{i+3}$ is a  mutual-visibility set of  $Q_h$ for every $v \in V(Q_H)$ and $ 1 \le i \le h-3$. 
We will show in the sequel that $L^v_i \cup L^v_{i+k}$ cannot form a  mutual-visibility set if $k \le 2$.

\begin{prop} \label{sosednji}
Let $h \ge 3$ and $v \in V(Q_h)$. If $1 \le i \le h-2$, then $L^v_i \cup L^v_{i+1}$ is not a  mutual-visibility set of  $Q_h$.
\end{prop}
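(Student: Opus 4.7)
The plan is to exhibit two specific vertices of $L^v_i \cup L^v_{i+1}$ for which every shortest path between them passes through an internal vertex that also belongs to $L^v_i \cup L^v_{i+1}$. By the vertex-transitivity of $Q_h$ there is no loss of generality in taking $v = 0^h$, so that $L^v_j$ consists of all binary strings of weight $j$.

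Since $1 \le i \le h-2$, one can pick a string $u$ of weight $i$ that has at least one coordinate equal to $1$ (say in position $p$) and at least two coordinates equal to $0$ (say in positions $q, r$). Let $w$ be obtained from $u$ by flipping the three bits in positions $p, q, r$. Then $w(w) = i - 1 + 2 = i+1$, and $d(u,w) = 3$, so $u \in L^v_i$ and $w \in L^v_{i+1}$. By Proposition \ref{kocke}(ii), the interval $I(u,w)$ is a $3$-cube and every shortest $u,w$-path is realized by performing, in some order, the three bit-flips at $p, q, r$: one "down" flip ($1 \to 0$) and two "up" flips ($0 \to 1$).

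The key step is then a weight-tracking argument. Depending on the position of the down-flip in the ordering, the sequence of weights along a shortest $u,w$-path is one of
\begin{equation*}
(i,\,i-1,\,i,\,i+1), \quad (i,\,i+1,\,i,\,i+1), \quad (i,\,i+1,\,i+2,\,i+1),
\end{equation*}
so in every case at least one of the two internal vertices has weight $i$ or $i+1$, hence lies in $M := L^v_i \cup L^v_{i+1}$. Therefore $u$ and $w$ are not $M$-visible, and $M$ fails to be a mutual-visibility set.

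The argument is essentially a case analysis on three interleavings of the flips, so no real obstacle is expected; the only care needed is in verifying that a suitable pair $(u,w)$ exists, which is exactly why the hypotheses $1 \le i$ (to have a $1$-bit to flip) and $i \le h-2$ (to have two $0$-bits to flip) are required.
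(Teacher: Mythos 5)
Your proposal is correct and follows essentially the same strategy as the paper: exhibit a non-adjacent pair $u \in L^v_i$, $w \in L^v_{i+1}$ and show by tracking weights that every shortest path between them has an internal vertex of weight $i$ or $i+1$. The only difference is that you fix a concrete pair at distance exactly $3$, which reduces the argument to an explicit three-case check, whereas the paper argues about the first and last internal vertices of an arbitrary non-adjacent pair.
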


\begin{proof}
Let $X := L^v_i \cup L^v_{i+1}$. 
We can set w.l.o.g. that $v = 0^h$. It follows that for every $u\in L^v_i$ and every $z \in L^v_{i+1}$ we have 
$w(u)=i$ and  $w(z)=i+1$. 
Since $|N(z) \cap  L^v_i|= i+1 < h$,  there exist a vertex  $u\in L^v_i $ such that $uz \not\in E(Q_h)$.  
Suppose that there exists an $X$-free  $z,u$-path $P$.  Let $x$ denote the vertex adjacent to $z$ in $P$.  
Obviously,   $x \in L^v_{i+2}$ or $x \in L^v_{i}$. Since $L^v_i \subset X$, 
 we have $x \in L^v_{i+2}$.  Moreover,  
$N(u) \subset  L^v_{i+1} \cup L^v_{i-1}$.
But since from Proposition  \ref{kocke} it follows that $P$ cannot posses a vertex from $L^v_{i-1}$, it follows that 
$P$ admits a vertex from $L^v_{i+1}$  and we obtained a contradiction.
\end{proof}

\begin{prop} \label{sosednji2}
Let $h \ge 3$ and $v \in V(Q_h)$. If $1 \le i \le h-3$,
 then  $L^v_i \cup L^v_{i+2}$ is not a  mutual-visibility set of  $Q_h$.
\end{prop}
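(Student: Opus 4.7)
The plan is to exhibit an explicit pair $u \in L^v_i$, $z \in L^v_{i+2}$ that fails to be $X$-visible, where $X := L^v_i \cup L^v_{i+2}$, by choosing them at Hamming distance $4$ in a very controlled way: three of the differing coordinates flip from $0$ to $1$ and one flips from $1$ to $0$. Then I argue that every shortest $u,z$-path, no matter in which order the four bits are toggled, is forced to pass through a vertex of $X\setminus\{u,z\}$ after exactly two steps.

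First I would set $v=0^h$ as in the previous proposition, so that vertices in $L^v_j$ are exactly the binary strings of weight $j$. The hypothesis $1\le i\le h-3$ guarantees that there exists $u\in L^v_i$ with at least one coordinate equal to $1$ and at least three coordinates equal to $0$; pick any such $u$, and form $z$ by flipping one $1$-coordinate of $u$ to $0$ and three $0$-coordinates of $u$ to $1$. Then $w(z)=i-1+3=i+2$, so $z\in L^v_{i+2}\subset X$, and $u$ and $z$ differ in exactly $4$ positions, hence $d(u,z)=4$.

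Next I analyze intermediate weights along any shortest $u,z$-path. By Proposition \ref{kocke}(i), a shortest $u,z$-path consists of toggling each of the four differing coordinates exactly once, in some order. If after the first two toggles we have performed $a$ flips of type $0\to 1$ and $b$ flips of type $1\to 0$ with $a+b=2$, then the current vertex has weight $i+a-b$. Because the overall move has only one $1\to 0$ flip and three $0\to 1$ flips, we must have $b\in\{0,1\}$, so $(a,b)\in\{(1,1),(2,0)\}$, and the intermediate vertex has weight $i$ or $i+2$. Either way, it belongs to $X$, and since we have done neither zero nor all four flips it is distinct from both $u$ and $z$. Therefore no shortest $u,z$-path is $X$-free, and $u,z$ are not $X$-visible, so $X$ is not a mutual-visibility set.

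I expect no serious obstacle here; the only delicate point is making the counting range $(a,b)\in\{(1,1),(2,0)\}$ genuinely cover every ordering of the four flips, which is an immediate consequence of the fact that the multiset of flips is fixed (three of one type, one of the other) and the first two steps must respect the availability of each type.
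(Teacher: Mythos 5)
Your proof is correct. It follows the same high-level strategy as the paper --- normalize $v=0^h$ and exhibit an explicit pair $u\in L^v_i$, $z\in L^v_{i+2}$ that cannot be $X$-visible --- but the witness pair and the path argument are genuinely different. The paper takes $u=0^{h-i}1^i$ and $z=1^{i+2}0^{h-i-2}$, whose distance can be much larger than $4$, and then runs a case analysis on the first edge leaving $z$ (into layer $i+1$ or layer $i+3$) and, in the second case, on the edge leaving $u$; this requires tracking which layers the path may visit and is somewhat delicate to write down completely. Your choice of a pair at distance exactly $4$ with a controlled flip profile (three coordinates going $0\to1$, one going $1\to0$) makes the obstruction purely arithmetic: by Proposition~\ref{kocke}(i) every shortest path toggles exactly those four coordinates once each, and since only one downward flip is available, the vertex reached after two steps has weight $i+a-b$ with $(a,b)\in\{(2,0),(1,1)\}$, hence lies in $L^v_i\cup L^v_{i+2}$ and is distinct from both endpoints. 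This eliminates the case analysis entirely and isolates the reason the construction needs $i\ge 1$ (one available $1$-coordinate) and $i\le h-3$ (three available $0$-coordinates), which is exactly the hypothesis of the proposition. The only thing your argument gives up relative to the paper's is the (irrelevant for the statement) information about pairs at larger distance.
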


\begin{proof}
Let $X := L^v_i \cup L^v_{i+2}$.
We can set w.l.o.g. that $v = 0^h$. It follows that for every $u\in L^v_i$ and every $z \in L^v_{i+2}$ we have 
$w(u)=i$ and  $w(z)=i+2$. 
Suppose that $u=0^{h-i}1^{i}$ and $z=1^{i+2}0^{h-i-2}$.
Since $h-i \ge 3$ and   $i+2 \ge 3$,  it holds that $d(u,z) > 2$.  
Suppose that there exists an $X$-free  $z,u$-path $P$.  Let $x$ denote the vertex adjacent to $z$ in $P$.
Obviously,   $x \in L^v_{i+3}$ or $x \in L^v_{i+1}$. 
If $x \in L^v_{i+1}$,  let $y \not = z$ denote the vertex adjacent to $x$ in $P$.  Note that $N(x) \subset  L^v_{i} \cup L^v_{i+2}$.  It follows that $y \in X$. Since by $d(u,z) > 2$ we have $y \not = u$, we obtain a contradiction.

If $x \in L^v_{i+3}$,  let $w$ denote the vertex adjacent to $u$ in $P$.
Since $N(u) \subset  L^v_{i+1} \cup L^v_{i-1}$, we have $w \in L^v_{i+1}$.  
But $N(w) \subset  L^v_{i} \cup L^v_{i+2} = X$ and 
we obtained a contradiction.
\end{proof}

Let $u\in V(Q_h)$ and $X \subseteq N(v)$ such that $|X|=d \ge 1$. 
Note that the vertices of $X \cup \{u\}$ belong to a unique $d$-cube, say  $Q$,   a subgraph of $Q_h$, 
which is induced by $I(u,v)$,  where $v$ is the vertex of $Q_h$ at distance $d-1$ from all vertices of $X$.
We will say that the sub-cube $Q$ is {\em raised} by $ X \cup \{u\}$. 

Let $M$ be a mutual-visibility set of  $Q_h$.
If $u \in M$,  then $u$ and the vertices of $M$ adjacent to $u$ restrict the  number of vertices of $M$ in the 
 corresponding sub-cube, as shown in the next proposition.

\begin{prop} \label{star}
Let $u$  be a vertex of a mutual-visibility set $M$ of  $Q_h$.   If $X = N[u] \cap M$
and $Q$ the sub-cube of $Q_h$ raised by $X$,  then    $V(Q) \cap M = X$. 
\end{prop}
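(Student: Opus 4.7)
The plan is to argue by contradiction: assume there exists $y \in V(Q) \cap M$ with $y \notin X$, and exhibit that no shortest $u,y$-path can be $M$-free, contradicting that $u$ and $y$ are $M$-visible.

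First I would normalize coordinates. Let $d := |X| - 1 = |N(u) \cap M|$; without loss of generality assume $u = 0^h$ and permute coordinates so that $X \setminus \{u\} = \{e_1, \ldots, e_d\}$, where $e_i$ denotes the binary string with a single $1$ in position $i$. Then $Q$ is the sub-cube $I(u,v)$ with $v = 1^d 0^{h-d}$, and by Proposition \ref{kocke}(ii) its vertex set is exactly the set of binary strings whose support is contained in the first $d$ coordinates. Next, given $y \in V(Q) \cap M$ with $y \notin X$, since $u \in X$ and $e_1,\dots,e_d \in X$, the vertex $y$ must have weight $w(y) \ge 2$, and its support lies in the first $d$ positions.

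Now take any shortest $u,y$-path $P$ in $Q_h$. Applying Proposition \ref{kocke}(i) to all $h - w(y)$ coordinates in which $u_j = y_j = 0$ confines $P$ to $I(u,y)$, i.e.\ to the set of binary strings supported only in the positions where $y$ has a $1$. Hence the neighbor of $u$ along $P$ is some $e_i$ with $y_i = 1$, and every such $e_i$ belongs to $X \setminus \{u\} \subseteq M$. Therefore $P$ meets $M \setminus \{u,y\}$, so no shortest $u,y$-path is $M$-free, contradicting the $M$-visibility of $u$ and $y$.

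The only delicate point, more a bookkeeping step than a real obstacle, is verifying that a shortest $u,y$-path in the whole cube $Q_h$ cannot escape $Q$ and must begin with one of the $e_i$; this is dispatched in one stroke by Proposition \ref{kocke}(i), so the argument goes through cleanly.
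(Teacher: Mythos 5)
Your proof is correct and follows essentially the same route as the paper's: both observe, via Proposition~\ref{kocke}, that any shortest path from $u$ to a vertex $y$ of the raised sub-cube outside $N[u]$ must pass through a neighbor of $u$ inside that sub-cube, and all such neighbors lie in $X\setminus\{u\}\subseteq M$, contradicting the $M$-visibility of $u$ and $y$. Your version merely makes the coordinate bookkeeping explicit where the paper leaves it implicit.
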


\begin{proof}
As noted above,  the sub-cube of $Q_h$ raised by $ X$  is the subgraph of $Q_h$ induced by  
 $I(u,v)$,  where $v$ is the vertex of $Q_h$ at distance $d-1$ from all vertices of $X \setminus \{u \}$.
Since from Proposition \ref{kocke} then it follows that for every $z \in I(u,v)\setminus M$ a shortest $u,z$-path contains 
a vertex from $X\setminus \{u \}$, the proof is complete.
\end{proof}

The above proposition states that if  $M$ contains a subset $Y \subseteq M$ such that $Q_h[Y]$ is isomorphic to $K_{1,d}$ then
 the $d$-cube raised by $Y$ cannot have other vertices in $M$. 

The following corollary of the above proposition 
can be utilized for a computer search of large mutual-visibility sets of  $Q_h$.
 
 \begin{prop} \label{omejitev}
 Let $h \ge 6$.
If $Y$ is a subset of a mutual-visibility set $M$ of  $Q_h$  such that $Q_h[Y]$ is isomorphic to $K_{1,4}$,  then 
$|M| \le 2^{h-1} - 2$.
\end{prop}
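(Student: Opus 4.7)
The plan is to argue by induction on $h$, handling the base case $h = 6$ directly and then exploiting the factorization $Q_h = Q_{h-1}\,\Box\,K_2$ in the inductive step.

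After applying an automorphism of $Q_h$ (XOR by $u$ followed by a coordinate permutation), I may assume the centre of the $K_{1,4}$ is $u = 0^h$ and its four $M$-neighbours are $u_i = 0^{i-1}10^{h-i}$ for $i \in \{1,2,3,4\}$. The $4$-cube $Q$ raised by $Y = \{u, u_1, u_2, u_3, u_4\}$ is then $\{x \in V(Q_h) : x_j = 0 \text{ for all } j \ge 5\}$, and Proposition \ref{star} gives $V(Q) \cap M = Y$, so exactly $5$ of the $16$ vertices of $Q$ lie in $M$.

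For the base case $h = 6$, split $Q_6$ along coordinate $6$ into copies $Q_5^{0} = \{x : x_6 = 0\}$ and $Q_5^{1} = \{x : x_6 = 1\}$ of $Q_5$; since $Y \subset Q_5^{0}$, split $Q_5^{0}$ further along coordinate $5$ into $Q$ and a disjoint $4$-cube $Q'$. By Proposition \ref{kocke}(i), any shortest path between two vertices that agree on a coordinate stays within the corresponding sub-cube, so $M \cap Q_5^{0}$ is a mutual-visibility set of $Q_5^{0} \cong Q_5$ and $M \cap Q'$ is one of $Q' \cong Q_4$. Hence $|M \cap Q'| \le \mu(Q_4) = 9$, giving $|M \cap Q_5^{0}| \le 5 + 9 = 14$, and $|M \cap Q_5^{1}| \le \mu(Q_5) = 16$, so $|M| \le 30 = 2^{5} - 2$.

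For the inductive step $h \ge 7$, split $Q_h$ along coordinate $h$ into copies $Q_{h-1}^{0}$ and $Q_{h-1}^{1}$ of $Q_{h-1}$. Since $Y \subset Q_{h-1}^{0}$ and $M \cap Q_{h-1}^{0}$ is a mutual-visibility set of $Q_{h-1}^{0}$ still containing the $K_{1,4}$ set $Y$, the induction hypothesis (valid because $h-1 \ge 6$) gives $|M \cap Q_{h-1}^{0}| \le 2^{h-2} - 2$, while Proposition \ref{mv1} yields $|M \cap Q_{h-1}^{1}| \le \mu(Q_{h-1}) \le 2^{h-2}$. Summing gives $|M| \le 2^{h-1} - 2$. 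The only delicate point is the base case, where the required slack of two vertices comes entirely from the gap $\mu(Q_4) = 9 < 11 = 2^{4} - 5$ that Proposition \ref{star} opens inside $Q$; once $h = 6$ is in place, the induction itself is routine.
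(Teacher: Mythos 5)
Your proof is correct and, once the induction is unrolled, coincides with the paper's argument: the paper partitions $Q_h$ directly into the raised $4$-cube (contributing exactly $5$), subcubes of dimensions $h-1,\dots,5$ (each bounded by $2^{d-1}$ via Proposition \ref{mv1}), and a final $Q_4$ (bounded by $\mu(Q_4)=9$), which is precisely what your base case plus inductive peeling along the last coordinate produces. Same ingredients, same bound; only the packaging differs.
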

 
\begin{proof}
Let $u$ denote the vertex of degree 4 in $Q_h[Y]$.
We can assume  without loss of generality  that $u=0^h$, while the other four vertices of $Y$ are $10^{h-1}$, $010^{h-2}$, 
$0010^{h-3}$ and $00010^{h-4}$.  

 The vertices of $Y$ raise the 4-cube, denoted as $Q^0$, where $Q^0$ is induced by the set of vertices of the form $b_1 b_2 b_3 b_40^{h-4}$,  $b_i \in \{0,1\}$. 
 Consider now the sequence of cubes  $Q^1 \ldots Q^{h-4}$ induced by the subsets of $V(Q_h)$ as follows:
 
 $V(Q^1) =  \{ b_1 b_2 \ldots b_{h-1} 1$,  $b_i \in \{0,1\} \}$,
 
 $V(Q^2) =  \{ b_1 b_2 \ldots b_{h-2} 10$,  $b_i \in \{0,1\} \}$,
 
: 
 
 $V(Q^{h-4}) =  \{ b_1 b_2 b_3 b_410^{h-5}$,  $b_i \in \{0,1\} \}$.

Note that for $i \in [h-4]$ the set $V(Q^i)$ possesses exactly $2^{h-i}$ vertices, while $V(Q^0)$ possesses 
exactly $2^{4}$ vertices.  Moreover, the vertices of $V(Q^0), \ldots, V(Q^{h-4})$ partition the vertices of $Q_h$.

 By Proposition  \ref{star}, the cube $Q^0$ contains exactly 5 vertices in $M$ (since $M \cap V(Q^0) = Y)$. 
 By Proposition \ref{mv1}, for $i \in [h-3]$ we have that  $|M \cap V(Q^i)| \le 2^{h-i-1}$, while 
 $|M \cap V(Q^{4})| \le 9 = 2^3+1$. 
 It follows that $M \le  2^{h-2} +  2^{h-3} + \ldots + 2^3 +6 = 2^{h-1} - 2$.

\end{proof}

\begin{thm} \label{mv2}
\begin{displaymath}
\mu(Q_h) =
        \left \{ \begin{array}{llll}
              2,  &  h = 1   \\
              3,  &  h = 2   \\
              5,  &  h = 3   \\
              9,  &  h = 4   \\
              16,  &  h = 5   \\
              32,  &  h = 6   \\
              59,  &  h = 7   \\
             \end{array}. \right.
\end{displaymath}
Moreover, if $h \ge 8$, then ${h \choose \lfloor \frac{h}{ 2} \rfloor - 1} + {h \choose \lfloor \frac{h}{ 2} \rfloor + 2} \le  \mu(Q_h) \le  59  \cdot 2^{h-7} $.
\end{thm}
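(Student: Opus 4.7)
The plan is to establish the exact values at $h=6,7$ by combining computational search with the structural bound from Proposition~\ref{omejitev}, and then to deduce both parts of the general inequality from earlier material (the lower bound comes from \cite{Cicerone}, the upper bound propagates Observation~\ref{observation}(i) from $h=7$).

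For the exact values, note first that cases $h\le 5$ are already recorded in Proposition~\ref{mv1}. For $h=6$ and $h=7$ the plan is to invoke the ILP and SAT formulations of Section~2. The matching lower bounds $32$ and $59$ are produced by explicit constructions found by the solvers (and one can verify them by hand using the shortest-path description of $Q_h$). For the upper bound $\mu(Q_6)\le 32$, observe that $\mu(Q_6)\le 2\mu(Q_5)=32$ by Observation~\ref{observation}(i) and Proposition~\ref{mv1}, so no computation is needed there. The genuinely hard case is $\mu(Q_7)\le 59$: Observation~\ref{observation}(i) only gives $\mu(Q_7)\le 64$, so the bound must be tightened. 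The plan is to argue by contradiction: assume $M$ is a mutual-visibility set of $Q_7$ with $|M|\ge 60$ and case-split on the maximum degree $\Delta$ of $Q_7[M]$. If some vertex $u\in M$ has $|N[u]\cap M|\ge 5$, then Proposition~\ref{omejitev} gives $|M|\le 2^{7-1}-2=62$, and one pushes the counting further using Proposition~\ref{star} applied to all vertices of a $K_{1,4}$ inside $M$ to descend below $60$. The remaining case, where $\Delta(Q_7[M])\le 3$, is handled via the SAT model with the extra constraints that no five-vertex star is present; this slashes the search space enough to certify $|M|\le 59$.

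For the lower bound when $h\ge 8$, I would reuse the construction from \cite{Cicerone}: for any $v\in V(Q_h)$ and $1\le i\le h-3$, the set $L^v_i\cup L^v_{i+3}$ is a mutual-visibility set of $Q_h$, and $|L^v_i\cup L^v_{i+3}|=\binom{h}{i}+\binom{h}{i+3}$. Choosing $i=\lfloor h/2\rfloor-1$ (which lies in $[h-3]$ precisely when $h\ge 4$) gives $\binom{h}{\lfloor h/2\rfloor-1}+\binom{h}{\lfloor h/2\rfloor+2}$, proving the claimed lower bound.

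For the upper bound when $h\ge 8$, I would proceed by induction on $h$ using Observation~\ref{observation}(i). The base case $h=7$ is $\mu(Q_7)=59$ from the first part. The inductive step is immediate: $\mu(Q_h)\le 2\mu(Q_{h-1})\le 2\cdot 59\cdot 2^{h-8}=59\cdot 2^{h-7}$. The main obstacle of the whole proof is therefore the verification of $\mu(Q_7)=59$; everything else is either an inductive doubling or a straightforward specialization of the $L^v_i\cup L^v_{i+3}$ construction.
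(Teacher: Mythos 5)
Your overall skeleton matches the paper's: small cases from Proposition~\ref{mv1}, explicit solver-found sets of sizes $32$ and $59$, the layer construction $L^v_{\lfloor h/2\rfloor-1}\cup L^v_{\lfloor h/2\rfloor+2}$ for the lower bound, and doubling via Observation~\ref{observation}(i) from $h=7$ for the upper bound (your remark that $\mu(Q_6)\le 2\mu(Q_5)=32$ needs no computation is correct and slightly cleaner than the paper's phrasing). The problem is your treatment of the one genuinely hard step, $\mu(Q_7)\le 59$. In your first case ($M$ contains an induced $K_{1,4}$), Proposition~\ref{omejitev} only yields $|M|\le 62$, and the claim that one ``pushes the counting further using Proposition~\ref{star} applied to all vertices of a $K_{1,4}$'' to get below $60$ is not an argument: applying Proposition~\ref{star} at a leaf $e_i$ of the star uses $N[e_i]\cap M$, which in the worst case is just $\{e_i,u\}$ and raises only the edge $ue_i$, giving no new exclusions. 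No counting is exhibited that closes the gap from $62$ to $59$. In your second case ($\Delta(Q_7[M])\le 3$), you assert that forbidding five-vertex stars ``slashes the search space enough,'' but the paper reports the opposite experience: even with the constraint $|N[u]\cap M|\le 4$ imposed, the ILP did not terminate after more than a month.

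The missing idea is the paper's two-stage symmetry-breaking. One first shows (by a separate ILP run) that any mutual-visibility set of $Q_7$ containing no triple inducing $K_{1,2}$ has size at most $49$; hence any $M$ with $|M|\ge 50$ contains an induced $K_{1,2}$, which by the transitivity of $Q_7$ may be taken to be $\{0000000,0000001,0000010\}$. Fixing these three vertices in the SAT model (together with restricting path lengths to $5$) is what makes the unsatisfiability check for size $60$ feasible. Without this step, neither of your two cases is actually closed, so the proposal does not establish $\mu(Q_7)=59$ and consequently leaves the upper bound $59\cdot 2^{h-7}$ for $h\ge 8$ unproved as well. The lower-bound half of your argument is fine.
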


\begin{proof}
The results for $h \le 5$ and the general lower bound are from Proposition \ref{mv1} (note that $i=\lfloor \frac{h}{ 2} \rfloor - 1$ maximizes
${h \choose i } + {h \choose i + 3}$ for  $i\ge 8$). 

For $Q_6$ and  $Q_7$ we found a mutual-visibility set with 32 and 
59 vertices, respectively, by using an ILP model. 
To improve the efficiency of computing a mutual-visibility set $M$, 
we imposed a constraint on the number of vertices in $M \cap N[u]$ to be 4 for every $u \in M$, as implied by Proposition \ref{omejitev}. 
These computations confirmed that the mutual-visibility number of $Q_6$ is 32.
However, for $Q_7$, the ILP model did not refute the existence of a larger mutual-visibility set, as the computations did not finish after more than one month.

To verify that a larger mutual-visibility set of $Q_7$ does not exist, we employed a reduction to SAT. 
Since the search space in a straightforward approach was too large, we imposed additional restrictions.

First, we searched for the largest mutual-visibility set of $Q_7$ using an ILP model while prohibiting triples 
$u,v,z \in M$ such that $Q_7[\{u,v,z\}]$ is isomorphic to $K_{1,2}$. We determined that the size of a mutual-visibility set of $Q_7$ with this restriction cannot exceed 49. 

The above result indicates that every mutual-visibility set $M$ of $Q_7$ of size greater than 49 must include at least one triple 
$u,v,z \in M$ such that $Q_7[\{u,v,z\}]$ is isomorphic to $K_{1,2}$. 
This fact allows us to fix the vertices 0000000, 0000001, 0000010 to be included  in $M$ in our SAT model. Additionally, we limited the length of the paths involved in the SAT formulas to 5.

The resulting model comprised 254268 variables and 874806 clauses. After computation lasting 962,000 seconds on a powerful desktop computer with 16 threads, we confirmed that the presented reduction is unsatisfiable. Consequently, we can conclude that a mutual-visibility set of 
 $Q_7$ with 60 vertices does not exist. 

Since the upper bound on $\mu(Q_h)$ for $h\ge 8$ follows from the fact that
$\mu_o(Q_7)=59$ and Observation 
\ref{observation}, the proof is complete.
\end{proof}

\subsection{Total mutual-visibility}

Let $V_1 \cup V_2$ be the partition of $V(Q_h)$ such that $V_1$ and $V_2$ comprise the set of vertices 
of $Q_h$ with even and odd weights, respectively. Then the {\em  halved cube } $Q_h^e$ (resp.  $Q_h^o$) is the 
graph with  $V(Q_h^e)=V_1$ (resp. $V(Q_h^o)=V_2$), where $u$ and $v$ are adjacent in  $Q_h^e$ (resp.  $Q_h^o$) 
if $d(u,v)_{Q_h}=2$ (see \cite{polkocke} for more details). 
Clearly, graphs $Q_h^e$ and $Q_h^o$ are isomorphic. 
Therefore, we can denote either of them as $Q_h'$ when it is more convenient.

An independent vertex set of a graph $G$ is a subset of the vertices of $G$ 
such that no two vertices in the subset are adjacent in $G$. The size of a largest independent set of $G$ is called the 
{\em independent number} of $G$ and denoted $\alpha(G)$. 

An {\em independent} vertex set of a graph $G$ is a subset of its vertices where no two vertices are adjacent. The size of the largest independent set of $G$ is referred to as the {\em independent number} of $G$, denoted as 
$\alpha(G)$.

The following result is (in more general form) given in \cite{Bujtas}.
\begin{thm} \label{sandi}
 $M$ is a total mutual-visibility set of $Q_h$ if and only if for every $u,v \in M$,  it holds that $d(u,v) \not = 2$.
\end{thm}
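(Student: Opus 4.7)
The proof naturally splits by implication, with both directions hinging on Proposition \ref{kocke}, which describes intervals in $Q_h$ as sub-cubes.

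For the forward direction I would argue by contrapositive. Suppose $u, v \in M$ with $d(u,v) = 2$. Then $I(u,v)$ is a $2$-cube, i.e.\ a $4$-cycle, with $u$ and $v$ antipodal; let $w_1, w_2$ be the two intermediate vertices. A quick computation shows $d(w_1, w_2) = 2$ and $I(w_1, w_2) = \{u, v, w_1, w_2\}$, so every shortest $w_1, w_2$-path passes through $u$ or $v$. Since both $u, v \in M$, the pair $w_1, w_2 \in V(Q_h)$ fails to be $M$-visible and $M$ cannot be a total mutual-visibility set.

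For the reverse direction I would assume that no two vertices of $M$ lie at distance $2$ and prove, by induction on $d = d(u,v)$, that every pair $u, v \in V(Q_h)$ admits a shortest $u, v$-path avoiding $M \setminus \{u, v\}$. The cases $d \le 1$ are vacuous since no internal vertex exists. For $d \ge 2$, the key step is to examine $N(u) \cap I(u, v)$: by Proposition \ref{kocke} this set has exactly $d$ elements, and any two of them are at Hamming distance $2$ (they are distinct neighbors of $u$). The hypothesis on $M$ therefore forces at most one of these $d$ vertices into $M$, so at least one $u' \in (N(u) \cap I(u, v)) \setminus M$ exists. The sub-interval $I(u', v)$ is a $(d-1)$-sub-cube of $I(u, v)$, the distance-$2$ hypothesis on $M$ persists when intersected with this sub-cube, and $d(u', v) = d - 1$, so by induction there is a shortest $u', v$-path $P'$ in $I(u', v)$ avoiding $M \setminus \{u', v\}$. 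Prepending the edge $uu'$ produces the required shortest $u, v$-path, using that $u' \notin M$ and that $u \notin I(u', v)$, so $u$ cannot accidentally re-appear as an internal vertex of $P'$.

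The main (minor) obstacle is really just that one combinatorial observation in the inductive step: that the neighbors of $u$ inside the sub-cube $I(u, v)$ are pairwise at exactly the forbidden Hamming distance $2$. Once this is in place the induction is routine, and the forward direction is a direct application of the same sub-cube description in the special case of the $2$-cube $I(u, v)$.
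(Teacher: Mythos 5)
Your proof is correct. Note, however, that the paper does not prove this statement at all: it is imported as a known result from Bujt\'as--Klav\v zar--Tian \cite{Bujtas} (where it is established in the more general setting of Hamming graphs), so there is no in-paper argument to compare against. Your two directions both check out as a self-contained elementary proof: for the forward direction, the two internal vertices $w_1,w_2$ of the $4$-cycle $I(u,v)$ have $u$ and $v$ as their only common neighbours, so every shortest $w_1,w_2$-path is blocked; for the converse, the $d$ neighbours of $u$ inside $I(u,v)$ are pairwise at Hamming distance $2$, so at most one lies in $M$ and (since $d\ge 2$) the induction can always step into an $M$-free neighbour $u'$, with $u\notin I(u',v)$ guaranteeing the concatenated path is a genuine shortest path. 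This is essentially the standard argument one would give for the Hamming-graph version as well.
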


We will demonstrate that determining the largest total mutual-visibility set in an $h$-cube is tantamount to identifying the largest independent set in the corresponding halved cube.

\begin{prop} \label{pol}
If $h \ge 1$,  then  $\mu_t(Q_h) = 2\alpha(Q_h')$.
\end{prop}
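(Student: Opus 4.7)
The plan is to combine Theorem \ref{sandi} with a parity split of $V(Q_h)$ into even-weight and odd-weight vertices, which is precisely the split that defines the two halved cubes $Q_h^e$ and $Q_h^o$. By Theorem \ref{sandi}, a total mutual-visibility set $M$ of $Q_h$ is exactly a subset of $V(Q_h)$ in which no two vertices lie at $Q_h$-distance $2$.

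First I would record the elementary parity fact that for any $u,v \in V(Q_h)$ the Hamming distance $H(u,v) = d_{Q_h}(u,v)$ has the same parity as $w(u)+w(v)$; in particular, two vertices at distance $2$ must have weights of the same parity. This tells us that the distance-$2$ incompatibility enforced by Theorem \ref{sandi} never occurs between an even-weight vertex and an odd-weight vertex. Hence if we write $M = M_e \cup M_o$ according to the parity of the weight, the conditions on $M_e$ and on $M_o$ are independent of one another, and each condition is exactly the independent set condition inside the corresponding halved cube, since by definition of $Q_h^e$ (respectively $Q_h^o$) adjacency corresponds to $Q_h$-distance $2$ between even-weight (respectively odd-weight) vertices.

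For the upper bound $\mu_t(Q_h) \le 2\alpha(Q_h')$ I would take any total mutual-visibility set $M$, apply Theorem \ref{sandi}, and conclude that $M_e$ is independent in $Q_h^e$ and $M_o$ is independent in $Q_h^o$, so that $|M| = |M_e| + |M_o| \le \alpha(Q_h^e) + \alpha(Q_h^o) = 2\alpha(Q_h')$, using the isomorphism $Q_h^e \cong Q_h^o$.

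For the matching lower bound, I would choose a maximum independent set $S_e$ of $Q_h^e$ and a maximum independent set $S_o$ of $Q_h^o$, each of size $\alpha(Q_h')$. Their union $S_e \cup S_o$ contains no pair at $Q_h$-distance $2$: pairs inside $S_e$ or inside $S_o$ are excluded by independence in the halved cubes, and pairs across $S_e$ and $S_o$ have odd $Q_h$-distance by the parity observation above. Theorem \ref{sandi} then certifies $S_e \cup S_o$ as a total mutual-visibility set of $Q_h$ of size $2\alpha(Q_h')$. The argument is essentially a reformulation rather than a deep proof; there is no real obstacle beyond verifying the parity step and invoking Theorem \ref{sandi} in both directions.
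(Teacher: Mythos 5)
Your proposal is correct and follows essentially the same route as the paper: both directions use Theorem \ref{sandi} together with the parity split of $M$ (or of a candidate set) into even- and odd-weight parts, the observation that cross-parity pairs have odd distance and hence never violate the distance-$2$ condition, and the identification of the within-parity condition with independence in $Q_h^e \cong Q_h^o$. No gaps; your explicit statement of the parity fact is if anything slightly cleaner than the paper's phrasing.
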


\begin{proof}
Let $I_e$ (resp. $I_o$) be an independent set in  $Q_h^e$ (resp. $Q_h^o$). 
Since $Q_h'$ is connected, for every $u,v \in  I_e$ 
(resp. $u,v \in I_o$) we have $d_{Q_h}(u,v) \ge 4$. Furthermore for every $u \in  I_e$ and $v \in  I_o$ it holds 
that $d_{Q_h}(u,v)$ is odd. Hence, by Theorem \ref{sandi} it follows that $I_e \cup I_o$ is a  total mutual-visibility set of $Q_h$.
Additionally, since $I_e \cap I_o = \emptyset$, we have $|I_e \cup I_o| = |I_e|+|I_o| \le 2\alpha(Q_h') \le  \mu_t(Q_h)$.

Now, consider $M$ as a total mutual-visibility set of 
 $Q_h$ and let $M_e$ (resp. $M_o$) denote all vertices of $M$ with (resp. odd) weight. 
By Theorem  \ref{sandi}, for every  $u,v \in M_e$ (resp. $u,v \in M_o$) 
we have $d_{Q_h}(u,v) \not = 2$. Moreover, since all weights of vertices in 
$M_e$ (resp. $M_o$) are of the same parity, we have 
$d_{Q_h}(u,v) \ge 4$ for every  $u,v \in M_e$ (resp. $u,v \in M_o$).
It is evident that $M_e$  (resp. $M_o$) forms an independent set in $Q_h^e$ (resp. $Q_h^o$). 
Therefore, $ |M_e|+|M_o| \le \mu_t(Q_h) \le  2\alpha(Q_h')$, completing the proof. 
\end{proof}

If $C \subseteq B^n$ such that for every $x,y \in C$ we have $H(x,y) \ge d$, 
then we say that $C$ is a {\em binary code of length $n$ and minimum Hamming distance $d$}. 

Let $A(n,d)$ 
denote the maximum size of a binary code of length $n$ and minimum Hamming distance $d$. 
It is well known, e.g. \cite{Sloane}, that $A(n,4) = A(n-1,3)$ for every $n \ge 6$. 

\begin{prop} \label{kode}
If $h \ge 1$,  then  $\alpha(Q_h') = A(h,4)$.
\end{prop}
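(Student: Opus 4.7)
The plan is to establish a correspondence between independent sets of $Q_h^e$ and binary codes of length $h$ with minimum Hamming distance $\ge 4$, thereby equating $\alpha(Q_h^e)$ with $A(h,4)$. The easy direction rests on the parity observation: every pair of distinct vertices in $Q_h^e$ consists of even-weight strings, so $H(u,v)$ is even, and hence $u,v$ are non-adjacent in $Q_h^e$ (equivalently, $d_{Q_h}(u,v) \ne 2$) precisely when $H(u,v) \ge 4$. Consequently, any independent set of $Q_h^e$ is a binary code of length $h$ with minimum distance $\ge 4$ whose codewords happen to all have even weight, which immediately yields $\alpha(Q_h^e) \le A(h,4)$.

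For the reverse inequality I would argue that insisting on even weights costs nothing. Starting from an optimal code $C \subseteq B^h$ with $|C| = A(h,4)$, I would split $C = C_e \cup C_o$ by parity of weight and define $\phi\colon C_o \to B^h$ by flipping the first coordinate, so that $\phi(C_o)$ lies in $V(Q_h^e)$. Then I would verify that $C^* := C_e \cup \phi(C_o)$ is still a set of size $A(h,4)$ with pairwise Hamming distance $\ge 4$. Pairs entirely inside $C_e$ or entirely inside $\phi(C_o)$ present no difficulty, since distances there agree with those inside $C$. The delicate case is a pair $u \in C_e$, $v \in C_o$, where $H(u,v)$ is odd and at least $4$, hence in fact at least $5$; flipping one coordinate of $v$ alters the distance by exactly $\pm 1$, so the new distance is still $\ge 4$. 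The same observation forces $\phi$ to be injective and $\phi(C_o) \cap C_e = \emptyset$, because a collision would produce two distinct codewords of $C$ at Hamming distance $1$. This exhibits $C^*$ as an independent set of $Q_h^e$ of cardinality $A(h,4)$, giving $\alpha(Q_h^e) \ge A(h,4)$.

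The single point requiring any real care is the jump from ``odd and $\ge 4$'' to ``$\ge 5$'', which is what makes the bit-flip transformation preserve the minimum distance; once this is recorded, the rest is routine bookkeeping. Since $Q_h' \cong Q_h^e$, we conclude $\alpha(Q_h') = A(h,4)$.
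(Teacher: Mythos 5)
Your proof is correct, and the lower-bound direction takes a genuinely different route from the paper. The paper starts from an optimal binary code $C$ of length $h-1$ with minimum distance $3$ and appends a parity bit ($u\mapsto u0$ or $u\mapsto u1$ according to the parity of $w(u)$) to land in $V(Q_h^e)$, so that pairwise distances become even and $\ge 3$, hence $\ge 4$; it then needs the cited identity $A(h,4)=A(h-1,3)$ to convert this into $\alpha(Q_h')\ge A(h,4)$. You instead work directly with an optimal length-$h$ distance-$4$ code, split it by weight parity, and push the odd-weight codewords into the even-weight class by flipping one fixed coordinate; the key point, which you correctly isolate, is that a cross-parity pair has odd distance $\ge 4$, hence distance $\ge 5$, so a single bit flip cannot drop it below $4$ (and the same reasoning rules out collisions between $C_e$ and $\phi(C_o)$). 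Your argument is more self-contained --- it avoids the identity $A(n,4)=A(n-1,3)$ altogether, which the paper only quotes for $n\ge 6$ even though the proposition is claimed for all $h\ge 1$ --- while the paper's version makes the connection to the classical extended-code construction explicit and leans on a standard cited fact. The upper-bound direction (an independent set of $Q_h'$ is itself a distance-$4$ code) is identical in both.
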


\begin{proof}
Let $C$ denote a binary code of length $h-1$ and minimum Hamming distance $3$.  
Let us define the sets $C_e$ and $C_o$ as follows:
$$C_e = \{ u0 \, | \, u \in C \, { \rm and} \, w(u)  \,  {\rm even} \}  \cup  \{ u1 \, | \, u \in C \,{ \rm and} \, w(u)  \, { \rm odd}  \}.$$
$$C_o = \{ u0 \, | \, u \in C \, { \rm and} \, w(u)  \,  {\rm odd} \}  \cup  \{ u1 \, | \, u \in C \,{ \rm and} \, w(u)  \, { \rm even}  \}.$$

Clearly, $C_e \subseteq V(Q_h^e)$ and $C_o \subseteq V(Q_h^o)$. 
Moreover, for every $u' \in C_e$, there exists 
exactly one $u \in C$ such that either $u' = u0$ or $u'=u1$. 
Since for every $u,v \in C$  we have $H(u,v) \ge 3$, for the corresponding vertices $u',v' \in C_e$ it holds 
$d_{Q_h}(u',v') \ge 3$. Moreover,  since $d(u',v')$ is even, we have 
$d_{Q_h}(u',v') \ge 4$. 
It follows that $C_e$ is an independent set in $Q_h^e$. Since the situation for  $C_o$ and 
 $Q_h^o$ is analogous, we have  $\alpha(Q_h') \ge A(n,4)$.
 
 Let $I$ be an independent set of $Q_h'$. Since for every $u, v \in I$ we
 have $d_{Q_h}(u,v) \ge 4$, it follows 
  that $I$ is  a binary code of length $h$ and minimum Hamming distance $4$.  
 Thus, $\alpha(Q_h') \le A(n,4)$. This assertion completes the proof.
\end{proof}

From Propositions \ref{pol} and \ref{kode} it immediately follows that the problem of finding a largest total  mutual-visibility set in a $h$-cube is equivalent to finding the maximum size of a binary code of length $h$ and minimum Hamming distance 4. 
 
\begin{thm}  \label{total}
If $h \ge 1$,  then  $\mu_t(Q_h) = 2\cdot A(h,4)$.
\end{thm}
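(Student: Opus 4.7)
The plan is to observe that Theorem \ref{total} is an immediate corollary of the two preceding propositions and requires essentially no new work. Proposition \ref{pol} establishes the identity $\mu_t(Q_h) = 2\alpha(Q_h')$, reducing the total mutual-visibility problem on the hypercube to an independence problem on the halved cube. Proposition \ref{kode} then identifies $\alpha(Q_h')$ with $A(h,4)$, the maximum size of a binary code of length $h$ and minimum Hamming distance~$4$. Chaining these equalities yields
\[
\mu_t(Q_h) \;=\; 2\,\alpha(Q_h') \;=\; 2\cdot A(h,4),
\]
which is exactly the claim.

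Since both ingredients are already proved in the excerpt, there is no real obstacle; the only thing to verify is that the propositions apply for the full range $h \ge 1$, which is indeed the hypothesis under which each of them is stated. Thus the proof reduces to a one-line substitution, and the theorem can be stated as the natural conclusion of the preceding development, emphasizing the equivalence between computing largest total mutual-visibility sets in $Q_h$ and determining $A(h,4)$, a well-studied coding-theoretic quantity.
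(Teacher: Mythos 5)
Your proposal matches the paper exactly: Theorem \ref{total} is stated there as an immediate consequence of chaining Proposition \ref{pol} ($\mu_t(Q_h)=2\alpha(Q_h')$) with Proposition \ref{kode} ($\alpha(Q_h')=A(h,4)$), with no further argument given. The one-line substitution you describe is precisely the intended proof.
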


Note that the exact values of $A(h,4)$ are known for every $h \le 16$, while 
for bigger dimension only upper and lower bounds have been obtained
(for more information on the subject please refer to \cite{Agrell, Agrell2}). 
Total mutual-visibility  numbers of $Q_h$, $ 2 \le h  \le 16$
are presented in Table 
\ref{t2}. 

\begin{table}[h]	
{\small
\begin{center}
\caption{ Total mutual-visibility numbers of $Q_h$. } \label{t2}
\bgroup
\def\arraystretch{1.4}
\begin{tabular}{|c||c|c|c|c|c|c|c|c|c|c|c|c|c|c| }
  \hline
  $h$  & 3 & 4 & 5 & 6 & 7 & 8 & 9 & 10 & 11 & 12 & 13 & 14 & 15 & 16 
  \\
  \hline
  $\mu_t(Q_h)$ 
   & 2 & 4 & 4 & 8 & 16 & 32 & 40 & 80 & 144 & 288 & 512 & 1024 & 2048 & 4096 
  \\
  \hline
 \end{tabular}
\egroup
\end{center}
}
\end{table}

\subsection{Outer mutual-visibility}

\begin{prop} \label{outer}
Let $h \ge 2$, $v \in V(Q_h)$ and $i \in [h]$.  Then $L^v_i$ is an outer  mutual-visibility set of  $Q_h$.
\end{prop}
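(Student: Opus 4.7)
Without loss of generality, set $v = 0^h$, so that $L^v_i$ is precisely the set of binary strings of weight $i$; write $M = L^v_i$. The plan is to exhibit, for every required pair of vertices, an explicit shortest path obtained by flipping the ``differing'' coordinates in a carefully chosen order, and to check that no intermediate vertex has weight $i$. Recall that for any $x,y \in V(Q_h)$, if $A = \{p : x_p = 1, \, y_p = 0\}$ and $B = \{p : x_p = 0,\, y_p = 1\}$, then $d(x,y) = |A|+|B|$, and any ordering of the coordinates in $A \cup B$ (flipped one at a time starting from $x$) produces a shortest $x,y$-path.

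For the first requirement of outer mutual-visibility, take $u, w \in M$. Both have weight $i$, so the associated sets satisfy $|A| = |B| = k$ for some $k \ge 0$; note also that $|A \cup B| = i + k - (\text{shared ones}) $ is at most $h$, hence $i+k \le h$. If $k = 0$ then $u = w$ and there is nothing to prove. Otherwise I would flip the coordinates of $B$ first (weights go $i, i+1, \dots, i+k$) and then the coordinates of $A$ (weights go $i+k, i+k-1, \dots, i$). Every intermediate vertex has weight strictly greater than $i$, hence lies outside $M$, so the path is $M$-free.

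For the second requirement, take $u \in M$ and $z \in V(Q_h) \setminus M$; write $j = w(z) \ne i$, and form $A, B$ as above. If $j > i$, then $|B| > |A|$ and I flip $B$ first, then $A$: intermediate weights lie in the interval $[i+1, i+|B|]$, all exceeding $i$. If $j < i$, then $|A| > |B|$ and I flip $A$ first, then $B$: intermediate weights lie in $[i-|A|, i-1]$, all less than $i$. In either case no interior vertex of the path has weight $i$, so the path is $M$-free.

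The only point requiring care is the weight-tracking along the two-phase paths; once one observes the simple bound $i+k \le h$ in the first case (so the ``go up then down'' phase is actually available in $Q_h$) and separates the second case by the sign of $j-i$, both verifications are immediate. Combining the two cases yields that $M = L^v_i$ is an outer mutual-visibility set of $Q_h$.
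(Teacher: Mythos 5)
Your proof is correct and follows essentially the same route as the paper: explicit shortest paths obtained by flipping the differing coordinates in two phases so that all interior weights stay strictly on one side of $i$. The only difference is that you reprove the within-$L^v_i$ visibility directly (via the up-then-down path), whereas the paper simply cites Di Stefano's result that $L^v_i$ is a mutual-visibility set and only constructs paths for the outer pairs.
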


\begin{proof}
It is shown in the proof of \cite[Theorem 1]{DiS} that $L^v_i $ is a   mutual-visibility set of  $Q_h$. 
Thus, we have to show that for every $z  \not \in  L^v_i$ and every $u \in  L^v_i$  there exists a $L^v_i$-free 
shortest $u,z$-path.  Let $z \in  L^v_j$,  $j\not=i$.  Obviously, $d(u,z) \ge  |i-j|$.

Suppose first that $j<i$. If $d(u,z)= i-j$, then it is not difficult to find a shortest $u,z$-path $P$ such that every vertex 
in $P$ apart of $u$ and $z$ belongs to $ L^v_k$, where $j < k < i$.  It follows that  $P$ is $L^v_i$-free.
If  $d(u,z) > i-j$, then let $S$ (resp.  $S'$) denote the set of indices from $[h]$, where for every $\ell \in S$ we have  
$u_\ell \not = z_\ell$ and $z_\ell=0$ (resp.  $z_\ell=1$).   Let $S=\{i_1,  \ldots, i_s\}$ and $S'=\{i_{s+1},  \ldots,  i_{s+t}\}$. 
Clearly,  $d(u,z)=s+t$.  
Let $x^0=u$, while  for $k \ge 1$ let  $x^k$ be obtained from  $u$ by replacing  $u_{i_{1}},  u_{i_{2}}, \ldots, u_{i_{k}}$ 
by $z_{i_{1}},  z_{i_{2}}, \ldots, z_{i_{k}}$.  We can see that $x^{s+t}=z$. 
We now construct a  shortest $u,z$-path $P$, where 
$P=x^0,x^1,\ldots, x^s,x^{s+1}, \ldots,x^{s+t}$.  Obviously, for every vertex  $y \in V(P) \setminus   \{u \}$ 
we have $y \in L^v_k$, where $k < i$. If follows that $P$ is $L^v_i$-free.

The proof for $j>i$ is analogous.
\end{proof}

\begin{thm} \label{omv_thm}
\begin{displaymath}
\mu_o(Q_h) =
        \left \{ \begin{array}{llll}
              2,  &  h = 1   \\
              2,  &  h = 2   \\
              4,  &  h = 3   \\
              6,  &  h = 4   \\
              12,  &  h = 5   \\
              24,  &  h = 6   \\
              40,  &  h = 7   \\
             \end{array}. \right.
\end{displaymath}
Moreover, if $h \ge 8$, then $ { h \choose \lfloor \frac{h}{ 2} \rfloor }  \le \mu(Q_h) \le  40 \cdot  2^{h-7} $.
\end{thm}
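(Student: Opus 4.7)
The plan is to split the proof into two pieces: the general bounds for $h \ge 8$, which are almost immediate from earlier results, and the exact small-case values $h \in \{1,\dots,7\}$, which require ad hoc arguments and computer search.

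For the lower bound when $h \ge 8$, I would simply invoke Proposition \ref{outer} with $i=\lfloor h/2 \rfloor$ and any chosen $v \in V(Q_h)$. That layer is an outer mutual-visibility set of size $\binom{h}{\lfloor h/2\rfloor}$, which delivers the stated inequality. For the upper bound I would iterate Observation \ref{observation}(iii): assuming $\mu_o(Q_7)=40$ is already established, induction gives $\mu_o(Q_h) \le 2^{h-7}\mu_o(Q_7) = 40\cdot 2^{h-7}$ for every $h \ge 7$.

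For the small cases, I would handle $h=1,2$ by direct inspection: in $Q_1$ the entire vertex set is trivially outer mutual-visible, and in $Q_2$ one easily checks that $\{00,11\}$ works while no $3$-element subset does, since any pair of non-adjacent vertices in the remaining singleton's layer would be blocked on both shortest paths. For each of $h=3,4,5,6,7$ the construction witnessing the claimed lower bound would be exhibited by running the ILP model of Section 2 (with the outer-visibility constraint (\ref{lpeq4})) or the SAT encoding (\ref{sateqo}); the resulting sets can then be verified by hand or by an independent checker. The matching upper bounds for $h \le 6$ should be within reach of a direct ILP run, possibly accelerated by symmetry-breaking that fixes a small number of coordinates (using the vertex-transitivity of $Q_h$).

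The main obstacle will almost certainly be certifying $\mu_o(Q_7) \le 40$. A bare ILP or SAT attack on $Q_7$ has a search space of $2^{128}$ subsets, so an unrestricted solver will not terminate. I would mimic the strategy used in the proof of Theorem \ref{mv2}: first run the ILP with additional structural constraints (e.g.\ forbidding certain induced $K_{1,d}$ subgraphs in $M$) to obtain a cardinality ceiling under those restrictions, then use that ceiling to fix a small configuration of vertices in $M$ by Ramsey-style case analysis, and finally feed the restricted problem to a SAT solver with path-length truncation close to $h$ (as justified in Section 2). Once $\mu_o(Q_7)=40$ is certified, the global upper bound $40\cdot 2^{h-7}$ and hence the theorem follow immediately from Observation \ref{observation}(iii).
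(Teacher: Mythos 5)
Your proposal follows essentially the same route as the paper: the lower bound via the middle layer $L^v_{\lfloor h/2\rfloor}$ from Proposition \ref{outer}, the upper bound by iterating Observation \ref{observation}(iii) from $\mu_o(Q_7)=40$, small cases by ILP/SAT, and the $Q_7$ ceiling certified by first bounding the restricted problem (the paper forbids adjacent pairs in $M$, obtaining a ceiling of $36$), then fixing an adjacent pair and running a path-length-truncated SAT instance. No substantive differences to report.
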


\begin{proof}
For $h \le 6$,  we found outer mutual-visibility numbers of $h$-cubes   by using an ILP model and a reduction to SAT as presented in Section 2. 

Moreover, we found an outer mutual-visibility set of $Q_7$ with 40 vertices. 
In order to confirm that an outer mutual-visibility set of cardinality 41 
does not exist, some additional restrictions where needed. 

First, we searched (by using an ILP model) a largest  outer mutual-visibility set $M$ of $Q_7$ such that pairs $u,v \in M$ with $uv \in E(Q_7)$ are forbidden and established that a largest  outer 
mutual-visibility set of $Q_7$ with this restriction is of size 36. 

The above result assures that every outer mutual-visibility set $M$ of $Q_7$ of size bigger than 36 must contain at least one pair 
$u,v \in M$ with $uv \in Q_7$. 
This fact allows us to preset the vertices 0000000 and 0000001 to be included  in $M$ in the applied SAT model. Moreover, we restricted the length of the paths involved in the SAT formulas to 4. 
The obtained model comprises 86889 variables and 230460 clauses.
We confirmed (the computation lasted 5562 seconds on a powerful desktop computer with 6 threads) that the  above reduction is unsatisfiable.
Hence, we may establish that an outer mutual-visibility set of $Q_7$ with 41 vertices does not exist. 

In order to prove the lower bound, remind that $L^v_i$ is an outer
mutual-visibility set for every $i \in [h]$ by Proposition \ref{outer}. 
If  $v=0^h$, then  $L^v_i$ 
contains vertices of $Q_h$ with exactly $i$ ones.  
It follows that $|L^v_i| = { h \choose  i  }$, which is maximized
for $i = \lfloor \frac{h}{ 2} \rfloor$.

Since $\mu_o(Q_7)=40$,  the upper bound on $\mu_o(Q_h)$ for $h\ge 8$ follows from Observation  \ref{observation}. This assertion completes  the proof. 
\end{proof}

\begin{figure}[bt] 
\centering
\scalebox{0.8}
{ \tikzstyle{nod}= [circle, draw,inner sep=0pt, minimum size=0.3cm]
\begin{minipage}{\textwidth}
\begin{center}
\begin{tikzpicture}
\pgfsetxvec{\pgfpoint{1.cm}{0.0cm}}
\pgfsetyvec{\pgfpoint{0.0cm}{1.0cm}}
\node[nod] at (0,0) [label=below:$0001$] (0001) {};
\node[nod] at (5,0) [label=below:$1001$] (1001) {};
\node[nod] at (0,5) [label=above:$0011$] (0011) {};
\node[nod] at (5,5) [label=above:$1011$] (1011) {};
\node[nod] at (2,2) [label=above:$0101$] (0101) {};
\node[nod] at (7,2) [label=below:$1101$] (1101) {};
\node[nod] at (2,7) [label=above:$0111$] (0111) {};
\node[nod] at (7,7) [label=above:$1111$] (1111) {};
\node[nod] at (2.4,1.4) [label=below:$0000$] (0000) {};
\node[nod] at (4.4,1.4) [label=below:$1000$] (1000) {};
\node[nod] at (2.4,3.4) [label=above:$0010$] (0010) {};
\node[nod] at (4.4,3.4) [label=below:$1010$] (1010) {};
\node[nod] at (3.4,2.4) [label=below:$0100$] (0100) {};
\node[nod] at (5.4,2.4) [label=below:$1100$] (1100) {};
\node[nod] at (3.4,4.4) [label=above:$0110$] (0110) {};
\node[nod] at (5.4,4.4) [label=below:$1110$] (1110) {};
\path (0011)
edge (1011)
edge (0111)
edge (0001)
(1001)
edge (0001)
edge (1101)
edge (1011)
(1111)
edge (1101)
edge (1011)
edge (0111)
(0010)
edge (1010)
edge (0110)
edge (0000)
(1000)
edge (0000)
edge (1100)
edge (1010)
(1110)
edge (1100)
edge (1010)
edge (0110);
\path[dashed] (0101)
edge (1101)
edge (0001)
edge (0111)
(0100)
edge (1100)
edge (0000)
edge (0110);
\path[dotted] (0000) edge (0001)
(0010) edge (0011)
(0100) edge (0101)
(0110) edge (0111)
(1000) edge (1001)
(1010) edge (1011)
(1100) edge (1101)
(1110) edge (1111);

\filldraw (2.4,1.4) circle (3pt);
\filldraw (0,0) circle (3pt);
\filldraw (3.4,4.4) circle (3pt);
\filldraw (2,7) circle (3pt);
\filldraw (4.4,3.4) circle (3pt);
\filldraw (5,5) circle (3pt);

\end{tikzpicture}
\end{center}
\end{minipage}
}
\caption{An outer mutual-visibility set of $Q_4$} \label{outer_fig}
\end{figure}
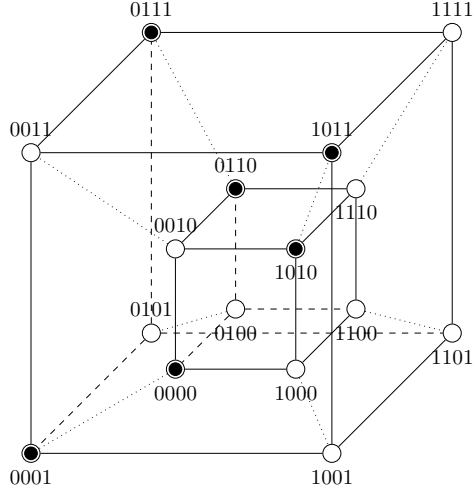

An example of a largest outer mutual-visibility set of $Q_4$ is presented in 
Fig. \ref{outer_fig}.

\subsection{Dual mutual-visibility}
Given a graph $G$, it is easy to see 
that every dual mutual-visibility set of $G$ is also a mutual-visibility set of $G$.

In this subsection, we first establish a more restricted relation between a mutual-visibility set and a dual  mutual-visibility set 
in hypercubes. More precisely, we present
 necessary and sufficient conditions  that a mutual-visibility set of  hypercube must satisfy in order to be a dual mutual-visibility set.

\begin{prop} \label{cube}
Let $M$ be a mutual-visibility set of $Q_h$.  Then $M$ is a dual mutual-visibility set of $Q_h$ if and only if 
for  every $u,v \in V(Q_h)$ with $d(u,v)=2$ it holds that  $|I(u,v) \cap M| \not=2$ or  $I(u,v) \cap M = \{w,  z\}$ and  $wz \in E(Q_h)$.
\end{prop}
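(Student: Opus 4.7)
For $(\Rightarrow)$, I would argue contrapositively. Suppose the condition fails at some $u,v$ with $d(u,v)=2$. By Proposition \ref{kocke}(ii), $Q_h[I(u,v)]$ is a $4$-cycle $u - w_1 - v - w_2 - u$, so the non-adjacent $2$-subsets of $I(u,v)$ are exactly $\{u,v\}$ and $\{w_1,w_2\}$. If $I(u,v) \cap M = \{u,v\}$ then $w_1, w_2 \in \overline M$, yet both shortest $w_1, w_2$-paths (note that $I(w_1, w_2) = I(u,v)$) pass through $\{u,v\} \subseteq M$. If $I(u,v) \cap M = \{w_1, w_2\}$, then $u, v \in \overline M$ but both shortest $u, v$-paths pass through $M$. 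Either way, dual mutual-visibility fails.

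For $(\Leftarrow)$, assume $M$ is a mutual-visibility set satisfying the distance-$2$ condition; I aim to show every $u, v \in \overline M$ are $M$-visible, by induction on $d := d(u,v)$. The cases $d \le 1$ are trivial (no intermediate vertex). For $d=2$, since $u, v \notin M$, a size-$2$ intersection $I(u,v) \cap M$ could only be the non-adjacent pair $\{w_1, w_2\}$, which is ruled out by the hypothesis; hence $|I(u,v) \cap M| \le 1$, and one of the two common neighbors yields an $M$-free shortest path.

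The inductive step $d \ge 3$ is the main obstacle. Let $[d]$ index the coordinates in which $u$ and $v$ differ, and for $S \subseteq [d]$ write $u_S$ for the vertex obtained from $u$ by flipping the coordinates in $S$, so $N_u := N(u) \cap I(u,v) = \{u_{\{i\}} : i \in [d]\}$. If some $u_{\{i\}} \notin M$, the induction hypothesis applied to the pair $u_{\{i\}}, v \in \overline M$ at distance $d-1$ produces an $M$-free shortest path, which extends via $u$ to an $M$-free shortest $u, v$-path. It remains to rule out $N_u \subseteq M$. Applying the distance-$2$ hypothesis to each pair $u_{\{i\}}, u_{\{j\}} \in M$ (whose interval $\{u, u_{\{i\}}, u_{\{j\}}, u_{\{i,j\}}\}$ contains the non-adjacent $M$-pair $\{u_{\{i\}}, u_{\{j\}}\}$ together with $u \notin M$) forces $u_{\{i,j\}} \in M$ for every $i \neq j$ in $[d]$. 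Since $d \ge 3$, fix three distinct indices $i, j, k$ and examine the pair $u_{\{i\}}, u_{\{j,k\}} \in M$ at distance $3$. A direct enumeration of the $3! = 6$ shortest $u_{\{i\}}, u_{\{j,k\}}$-paths shows that their intermediate vertex sets are $\{u, u_{\{j\}}\}$, $\{u, u_{\{k\}}\}$, $\{u_{\{i,j\}}, u_{\{j\}}\}$, $\{u_{\{i,j\}}, u_{\{i,j,k\}}\}$, $\{u_{\{i,k\}}, u_{\{k\}}\}$, $\{u_{\{i,k\}}, u_{\{i,j,k\}}\}$; each of these contains at least one of $u_{\{j\}}, u_{\{k\}}, u_{\{i,j\}}, u_{\{i,k\}}$, all of which lie in $M$. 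Hence no $M$-free shortest $u_{\{i\}}, u_{\{j,k\}}$-path exists, contradicting that $M$ is a mutual-visibility set. This contradiction completes the induction.
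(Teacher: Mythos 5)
Your proof is correct. The forward direction coincides with the paper's (the paper only spells out the case $I(u,v)\cap M=\{u,v\}$, but as you note the two non-adjacent diagonals of the $4$-cycle share the same interval, so the cases are symmetric). In the converse direction both arguments share the same skeleton — reduce to a closest non-$M$-visible pair $u,v\in \overline M$ and show that a full layer of $I(u,v)$ adjacent to one endpoint lies in $M$ — but the contradictions are reached differently. The paper works at the far endpoint: it invokes Proposition~\ref{sosednji} to produce a vertex $z$ of the second layer outside $M$, and then the two common neighbours of $z$ and the endpoint form a forbidden non-adjacent pair, violating the distance-$2$ hypothesis. You work at the near endpoint and in the opposite logical direction: the distance-$2$ hypothesis \emph{forces} the entire second layer $\{u_{\{i,j\}}\}$ into $M$, and the contradiction is then with $M$ being a mutual-visibility set, via the explicit enumeration of the six shortest paths between $u_{\{i\}}$ and $u_{\{j,k\}}$ at distance $3$. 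Your route is self-contained (it does not rely on Proposition~\ref{sosednji}) and is entirely local, using only a radius-$2$ ball around $u$ inside $I(u,v)$; the paper's route is shorter on the page because it outsources the combinatorial work to an earlier proposition. Both are valid; the only point worth making explicit in your write-up is that prepending $u$ to an $M$-free shortest $u_{\{i\}},v$-path stays $M$-free precisely because $u_{\{i\}}\notin M$, which you do use but state only implicitly.
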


\begin{proof}
If $M$ is a dual mutual-visibility set of $Q_h$,  then $M$ cannot admit vertices $u,v$ at distance 2 such that   
 $I(u,v) \cap M = \{u,  v\}$, as in this case,  for  $I(u,v) \setminus  M = \{x,  y\}$ we cannot find 
 an $M$-free shortest $x,y$-path.

Let for every 
$u,v \in V(Q_h)$ with $d(u,v)=2$ it holds that  $|I(u,v) \cap M| \not=2$ or  $I(u,v) \cap M = \{w,  z\}$ and  $wz \in E(Q_h)$.
Since $M$ is a mutual-visibility set of $Q_h$, for every $u,v \in V(Q_h)$ with $d(u,v)=2$ it holds that $|I(u,v)\cap M| \le 3$.
Now, we'll demonstrate that for every $x,y \in Q_h \setminus M$ there exists an $M$-free shortest $x,y$-path. 
If $d(x,y) \le 2$,  the claim is evident. Suppose, for contradiction,  that there exist $x,y \in \overline M$ with
 $d(x,y) > 2$ that are not $M$-visible.  Moreover,  let $x$ and $y$ be  vertices of $\overline M$
 that are not $M$-visible with the minimal distance $d(x,y)=i$. 
   
 By Proposition \ref{kocke} we have that $Q_h[I(x,y)]$ is isomorphic to $Q_i$. 
Let $L^x_j$ denote the set of vertices of $I(x,y)$ that are at distance $j$ from $x$. 
 
 We first show that vertices of $N_{Q_h[I(u,v)]}(y) = L^x_{i-1}$ must belong to $M$. Assuming otherwise, let $z\in L^x_{i-1}\setminus M$.
 By minimality of $i$, it follows that $z$ is $M$-visible, leading to the conclusion that  $y$ is also $M$-visible, contradicting the assumption.
 
 Therefore, we may assume that vertices of $L^x_{i-1}$ belong to $M$. 
 On other hand,  by Proposition   
 \ref{sosednji}, there exists a vertex $z \in L^x_{i-2}$ that does not belong to $M$.  
 Since $d(u,z)=2$, vertices $y$ and $z$ admit two common neighbours from the set  $L^x_{i-1} \subset M$, leading to a contradiction.
\end{proof}

\begin{thm} \label{dmv_thm}
\begin{displaymath}
\mu_d(Q_h) =
        \left \{ \begin{array}{llll}
              2,  &  h = 1   \\
              3,  &  h = 2   \\
              4,  &  h = 3   \\
              8,  &  h = 4   \\
              10,  &  h = 5   \\
              20,  &  h = 6   \\
              29,  &  h = 7   \\
             \end{array}. \right.
\end{displaymath}
Moreover, if $h \ge 8$, then $ 2 \cdot A(h,4) \le \mu(Q_h) \le  29 \cdot  2^{h-7} $.
\end{thm}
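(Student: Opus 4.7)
The plan is to split the proof into three parts: the exact values for $h \le 7$, the lower bound for $h \ge 8$, and the upper bound for $h \ge 8$. The lower and upper bounds are straightforward consequences of earlier results, while the exact values require computational effort augmented by the structural characterization in Proposition \ref{cube}.

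For the lower bound $2 \cdot A(h,4) \le \mu_d(Q_h)$, I would observe that every total mutual-visibility set is automatically a dual mutual-visibility set: the total condition demands $M$-visibility for all pairs in $V(G)$, which in particular covers the pairs within $M$ and within $\overline{M}$ required by the dual definition. Hence $\mu_d(Q_h) \ge \mu_t(Q_h)$, and Theorem \ref{total} supplies $\mu_t(Q_h) = 2\cdot A(h,4)$. For the upper bound, iterate Observation \ref{observation}(iv): $\mu_d(Q_h) \le 2\mu_d(Q_{h-1}) \le \cdots \le 2^{h-7}\mu_d(Q_7) = 29 \cdot 2^{h-7}$, assuming the base case $\mu_d(Q_7)=29$ is established.

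For the small exact values, the cases $h \le 5$ can be handled by a direct ILP or SAT encoding that adds the dual constraints (\ref{lpeq3}) resp.\ (\ref{sateqd}) of Section 2 to the basic mutual-visibility model; the answers of $2, 3, 4, 8, 10$ should fall out of modest computations. For $h=6$ and $h=7$, I would first find dual mutual-visibility sets of cardinality $20$ and $29$ respectively by ILP, then exploit Proposition \ref{cube} to sharpen the search for matching upper bounds. The characterization says that a mutual-visibility set $M$ fails to be dual precisely when some interval $I(u,v)$ with $d(u,v)=2$ meets $M$ in exactly two non-adjacent vertices; equivalently, for every such 2-interval $\{u,v,w,z\}$ (a copy of $C_4$), the pattern $|M \cap I(u,v)|=2$ with the two chosen vertices antipodal in the 4-cycle is forbidden. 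Encoded as additional clauses for each of the $\binom{h}{2}2^{h-2}$ induced 4-cycles of $Q_h$, this significantly prunes the search.

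The main obstacle will be proving the matching upper bound $\mu_d(Q_7) \le 29$. Following the pattern of Theorem \ref{mv2}, I anticipate needing a multi-stage argument: first run an ILP with the pure antipodal restriction from Proposition \ref{cube} to establish that any dual mutual-visibility set of size $\ge 30$ must contain a specific small configuration (for example, a pair or triple inducing a prescribed subgraph), then fix such a configuration by symmetry (say setting $0^7, 0^610, 0^5 100 \in M$), truncate the SAT encoding to paths of length at most $5$ or $6$, and run a high-memory SAT solver such as Cryptominisat5 to certify unsatisfiability. This is the computationally expensive step, and its feasibility rests on choosing symmetry-breaking fixings aggressive enough to cut the search space by several orders of magnitude without eliminating any solution up to automorphism of $Q_7$.
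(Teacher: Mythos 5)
Your proposal follows essentially the same route as the paper: exact values for $h\le 6$ by direct ILP/SAT computation; for $Q_7$ an auxiliary ILP run showing that any dual mutual-visibility set larger than a small threshold must contain a $K_{1,2}$ triple, which justifies fixing the triple $0000000,0000001,0000010$ and certifying unsatisfiability of size $30$ via SAT with truncated path lengths; the lower bound $2\cdot A(h,4)$ from the fact that total mutual-visibility sets are dual mutual-visibility sets together with Theorem \ref{total}; and the upper bound by iterating Observation \ref{observation}. The only differences are minor computational details (the paper truncates paths to length $4$ and does not explicitly encode the $C_4$ characterization of Proposition \ref{cube} as extra clauses), not a different argument.
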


 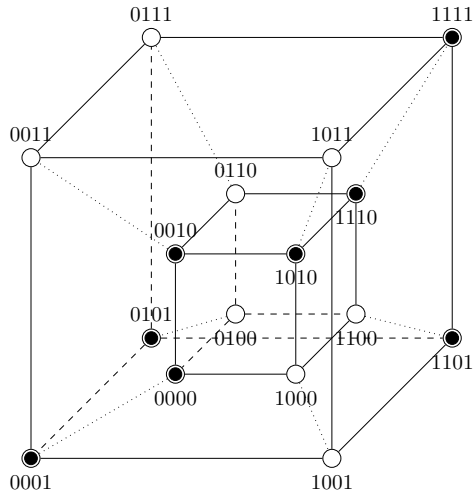
\begin{figure}[bt] 
 \centering
 \scalebox{0.8}
 { \tikzstyle{nod}= [circle, draw,inner sep=0pt, minimum size=0.3cm]
\begin{minipage}{\textwidth}
\begin{center}
\begin{tikzpicture}
\pgfsetxvec{\pgfpoint{1.cm}{0.0cm}}
\pgfsetyvec{\pgfpoint{0.0cm}{1.0cm}}
\node[nod] at (0,0) [label=below:$0001$] (0001) {};
\node[nod] at (5,0) [label=below:$1001$] (1001) {};
\node[nod] at (0,5) [label=above:$0011$] (0011) {};
\node[nod] at (5,5) [label=above:$1011$] (1011) {};
\node[nod] at (2,2) [label=above:$0101$] (0101) {};
\node[nod] at (7,2) [label=below:$1101$] (1101) {};
\node[nod] at (2,7) [label=above:$0111$] (0111) {};
\node[nod] at (7,7) [label=above:$1111$] (1111) {};
\node[nod] at (2.4,1.4) [label=below:$0000$] (0000) {};
\node[nod] at (4.4,1.4) [label=below:$1000$] (1000) {};
\node[nod] at (2.4,3.4) [label=above:$0010$] (0010) {};
\node[nod] at (4.4,3.4) [label=below:$1010$] (1010) {};
\node[nod] at (3.4,2.4) [label=below:$0100$] (0100) {};
\node[nod] at (5.4,2.4) [label=below:$1100$] (1100) {};
\node[nod] at (3.4,4.4) [label=above:$0110$] (0110) {};
\node[nod] at (5.4,4.4) [label=below:$1110$] (1110) {};
\path (0011)
edge (1011)
edge (0111)
edge (0001)
(1001)
edge (0001)
edge (1101)
edge (1011)
(1111)
edge (1101)
edge (1011)
edge (0111)
(0010)
edge (1010)
edge (0110)
edge (0000)
(1000)
edge (0000)
edge (1100)
edge (1010)
(1110)
edge (1100)
edge (1010)
edge (0110);
\path[dashed] (0101)
edge (1101)
edge (0001)
edge (0111)
(0100)
edge (1100)
edge (0000)
edge (0110);
\path[dotted] (0000) edge (0001)
(0010) edge (0011)
(0100) edge (0101)
(0110) edge (0111)
(1000) edge (1001)
(1010) edge (1011)
(1100) edge (1101)
(1110) edge (1111);

\filldraw (2.4,1.4) circle (3pt);
\filldraw (0,0) circle (3pt);
\filldraw (2.4,3.4) circle (3pt);
\filldraw (2,2) circle (3pt);
\filldraw (4.4,3.4) circle (3pt);
\filldraw (7,2) circle (3pt);
\filldraw (5.4,4.4) circle (3pt);
\filldraw (7,7) circle (3pt);

\end{tikzpicture}
\end{center}
\end{minipage}
}
\caption{A dual mutual-visibility set of $Q_4$} \label{dual_fig}
\end{figure}

\begin{proof}
For $h \le 6$,  we determined dual mutual-visibility numbers of $h$-cubes    using an ILP model and a reduction to SAT, as presented in Section 2. 

Additionally, we determined a dual mutual-visibility set of $Q_7$ with 29 vertices. 
However, the applied models did not rejected  the existence of a larger 
dual mutual-visibility set since the computations 
did not finish in a reasonable time.  

Thus, in order to confirm that a dual mutual-visibility set of cardinality 30 
does not exist, some additional restrictions where needed. 

First, we search (by using an ILP model) a largest  outer mutual-visibility set $M$ of $Q_7$ such that triples $u,v,z \in M$ with 
$Q_7[\{u,v,z\}]$ isomorphic to $K_{1,2}$ are forbidden, and we established that a largest  dual mutual-visibility set of $Q_7$ with this restriction has a size of 16. 

This result indicates that every dual mutual-visibility set $M$ of $Q_7$ of size greater than 16  must contain at least one triple $u,v,z \in M$ with 
$Q_7[\{u,v,z\}]$ isomorphic to $K_{1,2}$.
This fact allowed us to preselect  the vertices 0000000, 
0000001 and 0000010 to be included in $M$ in the applied SAT model. Moreover, we restricted the length of the paths involved in the SAT formulas to 4. 
The obtained model comprises 94110 variables and 244892 clauses.
We confirmed (the computation lasted 5562 seconds on a powerful desktop computer with 6 threads) that the  above presented reduction is unsatisfiable.
Hence, we may establish that an dual mutual-visibility set of $Q_7$ with 30 vertices does not exist. 

By definition, every total mutual-visibility set is also a dual 
 mutual-visibility set. Hence, the lower bound readily follows from Theorem
 \ref{total}. 
 Since the upper bound on $\mu_o(Q_h)$ for $h\ge 8$ follows from the fact 
that $\mu_d(Q_d)=29$ and Observation 
\ref{observation}, the proof is complete.
\end{proof}

It is worth to mention that the existence of a dual mutual-visibility set 
of cardinality $k$ in $Q_h$ does not guarantee that there exists  a dual mutual-visibility set of cardinality $k-1$ in $Q_h$. 
In particular, there is no dual mutual-visibility set of cardinality 7 in 
$Q_4$ although the dual mutual-visibility  number of $Q_4$ is 8. 
An example of a largest dual mutual-visibility set of $Q_4$ is presented in  Fig.  \ref{dual_fig}. 

\subsection{Summary of results and concluding remarks}
This section summarize  all known values and bounds on the 
mutual-visibility, dual mutual-visibility and outer mutual-visibility 
number of hypercubes. The corresponding values as well as total 
mutual-visibility  numbers for hypercubes of dimensions up to 11  are presented in Table \ref{t1}. 
(As already noted, the
problem of finding a largest total  mutual-visibility set in a $h$-cube is equivalent to finding the maximum size of a binary code of length $h$ and minimum Hamming distance 4. For that reason, this invariant is presented more in detail in Subsection 3.2.) 
With respect to the previous subsections, the table presents 
some additional lower bounds on the invariants studied in this paper. 
These bounds were improved by using a reduction to SAT.  
We provide the results of our computations, including the cardinalities of the obtained sets (as well as the time needed and the number of threads used), in the sequel.

\begin{table}[h]	
{\small
\begin{center}
\caption{Lower bounds, upper bounds and exact values on mutual-visibility number varieties } \label{t1}
\bgroup
\def\arraystretch{1.4}
\begin{tabular}{|c||c|c|c|c| }
  \hline
  $h$  & $\mu(Q_h)$ & $\mu_t(Q_h)$ & $\mu_d(Q_h)$ & $\mu_o(Q_h)$ \\ 
  \hline
  \hline
  3 &  5  & 2 & 4 & 4 \\
\hline
  4 &  9  &  4 &  8 &  6  \\
\hline
  5 &  16  & 4 &  10 & 12 \\
\hline
  6 &  32  & 8  &  20 & 24 \\
\hline
  7 &  59  & 16 &  29  &  40 \\
\hline
  8 & 116-118  & 32 &  52-58 &  72-80 \\
\hline
  9 &  222-236  & 40 &  86-116 & 126-160 \\
\hline
  10 &  432-472  &  80 &  148-232 & 252-320 \\
\hline
  11 &  820-944  & 144 &  210-464 & 462-640 \\
\hline
\end{tabular}
\egroup
\end{center}
}
\end{table}

In particular, we found 
a mutual-visibility set of $Q_8$ of size 116 (2 seconds, 2 threads),
a dual mutual-visibility set of $Q_8$  of size 52 (27140 seconds, 2 threads), 
an outer mutual-visibility set  of $Q_8$  of size 72 (3100 seconds, 10 threads), 
a mutual-visibility set of $Q_9$ of size 222 (840 seconds, 2 threads),
a dual mutual-visibility set of $Q_9$  of size 86 ($7.7 \cdot 10^5$ seconds, 10 threads), 
a mutual-visibility set of $Q_{10}$ of size 432  (4500 seconds, 32 threads),
a dual mutual-visibility set of $Q_{10}$ of size 148 (610000 seconds, 8 threads), a mutual-visibility set of $Q_{11}$ of size 820 (23400 seconds, 2 threads) and 
a dual mutual-visibility set of $Q_{11}$ of size 210 (27600 seconds, 16 threads).

In order to narrow down the search space for hypercubes of dimensions 8 or higher, we employed two additional heuristics, leading to the discovery of some large mutual-visibility and dual mutual-visibility sets.

In the first heuristic, we search a mutual-visibility set $M$ (or its variety) by presetting a "substantial" subset of vertices of $Q_{h}$ to 
be included in M in the applied SAT model. 
In this context, we may utilize the fact that every total mutual-visibility set also functions as a (standard) mutual-visibility set, as well as an outer and dual mutual-visibility set. Additionally, every outer (or dual) mutual-visibility set is also a (standard) mutual-visibility set. 
However, the best results in searching large mutual-visibility sets 
of  $Q_{h}$, $h\ge 8$, by this approach were obtained by preselecting 
vertices of the set $L_{\lfloor \frac{h}{ 2} \rfloor - 1}^v \cup 
L_{\lfloor \frac{h}{ 2} \rfloor + 2}^v$, where $v=0^h$.


The second heuristic leveraged the observation that for $h \le 6$, every vertex $u=u_1, \ldots, u_h$ in the largest computed dual mutual-visibility set $M$ of $Q_h$ has its corresponding \textit{antipode} in $M$, denoted as 
$v=v_1, \ldots, v_h$, where $v_i = 1 - u_i$, $i \in [h]$. 
Inspired by this, we devised a SAT model wherein the antipode of every vertex in the form $u=u_1, \ldots, u_{h-1}0$ included in $M$ is always inserted into $M$. This approach significantly accelerated the search for large dual mutual-visibility sets. Notably, we managed to find dual mutual-visibility sets of $Q_{10}$ and $Q_{11}$  with 148 and 210 vertices, respectively.

\subsection*{Data availability}
The datasets generated during and/or analysed during the current study are available on the web page \url{https://omr.fnm.um.si/wp-content/uploads/2024/04/hypercubesMV.pdf} and from the corresponding author on reasonable request.

\subsection*{Funding}
This work was supported by the Slovenian Research Agency under the grants P1-0297, J1-2452, J1-1693 and  J2-1731.



\end{document}